\newtheorem{theorem}{Theorem}[section]
\newtheorem{lemma}[theorem]{Lemma}
\newtheorem{corollary}[theorem]{Corollary}
\newtheorem{remark}[theorem]{Remark}
\newcommand{\qed}{\enspace\vrule height6pt width4pt depth2pt}
\newenvironment{proof}{\par\noindent{\bf Proof.}}{$\qed$\par\bigskip}
\newcommand{\vp}{\varphi}
\newcommand{\Char}{{\rm char}}
\newcommand{\GEN}[1]{\langle #1 \rangle}
\newcommand{\U}{{\cal U}}
\author{O. Broche \and E. Jespers \and C. Polcino Milies \and M. Ruiz}
\begin{document}

\title{ANTISYMMETRIC ELEMENTS IN GROUP RINGS II}

\date{}


\maketitle

\begin{abstract}
Let $R$ be a commutative ring, $G$ a group and $RG$
its group ring. Let $\vp : RG\rightarrow RG$ denote
the $R$-linear extension of an involution $\vp$
defined on $G$. An element $x$ in $RG$ is said to be
$\vp$-antisymmetric if $\vp (x) = -x$. A
characterization is given of when the
$\vp$-antisymmetric elements of $RG$ commute. This
is a completion of earlier work.
\end{abstract}

\textbf{keywords}: Involution; group ring;
antisymmetric elements.

\textbf{keywords}: 2000 Mathematics Subject
Classification: 16S34, 16W10,  20C07.

\section{Introduction.}
Throughout this paper   $R$ is a commutative ring
with identity, $G$ is a group and $\vp$ is an
involution on $G$.  Clearly $\vp$ can be extended
linearly to an involution $\vp :RG\rightarrow RG$ of
the group ring $RG$. Set $R_2=\{r\in R\mid 2r=0 \}$.
We denote by $(RG)_{\varphi}^{-}$ the Lie algebra
consisting of the $\varphi$-antisymmetric elements
of $RG$, that is $$(RG)^-_\varphi=\{\alpha\in RG|\;
\varphi(\alpha)=-\alpha\}.$$

For general algebras $A$ with an involution $\vp$,
we recall some important results that show that
crucial information of the algebraic structure of
$A$ can be determined by that of $(A)^-_\vp$ and the
latter has information that is determined by the
$\varphi$-unitary unit group $U_{\vp} (A)=\{ u\in
A\mid u \vp (u) =\vp (u)u=1\}$. By $U(A)$ we denote
the unit group of $A$. Amitsur in \cite{A} proves
that if $A_{\varphi}^{-}$ satisfies a polynomial
identity (in particular when $A_{\varphi}^{-}$ is
commutative) then $A$ satisfies a polynomial
identity. Gupta and Levin in \cite{gupta} proved
that for all $n\geq 1$ $\gamma_n(\U(A))\leq 1+
L_n(A)$. Here $\gamma_n(G)$ denotes the $n$th term
in the lower central series of the group $G$ and
$L_n(A)$ denotes the two sided ideal of $A$
generated by all Lie elements of the form
$[a_1,a_2,\dots,a_n]$ with $a_i\in A$ and
$[a_1]=a_1$, $[a_1,a_2]=a_1a_2-a_2a_1$ and
inductively
$[a_1,a_2,\dots,a_n]=[[a_1,a_2,\dots,a_{n-1}],a_n]$.
Smirnov and Zalesskii in \cite{ZS}, proved that, for
example, if the Lie ring generated by the elements
of the form $g+g^{-1}$ with $g\in \U(A)$ is Lie
nilpotent then $A$ is Lie nilpotent. In \cite{GM}
Giambruno and Polcino Milies show that if $A$ is a
finite dimensional semisimple algebra over an
algebraically closed field $F$ with $char(F)\neq 2$
then $\mathcal{U}_{\varphi}(A)$ satisfies a group
identity if and only if $(A)^{-}_{\varphi}$ is
commutative. Furthermore, if $F$ is a nonabsolute
field then $\mathcal{U}_{\varphi}(A)$ does not
contain a free group of rank $2$ if and only if
$(A)^{-}_{\varphi}$ is commutative. Giambruno and
Sehgal, in \cite{gia-seh}, showed that if $B$ is a
semiprime ring with involution $\varphi$, $B=2B$ and
$(B)^{-}_{\varphi}$ is Lie nilpotent then
$(B)^{-}_{\varphi}$ is commutative and $B$ satisfies
a polynomial identity of degree $4$.

Special attention has been given to the classical
involution $*$ on $RG$, that is, the $R$-linear map
defined by mapping $g\in G$ onto $g^{-1}$. In case
$R$ is a field of characteristic $0$ and $G$ is a
periodic group, Giambruno and Polcino Milies in
\cite{GM} described when $\mathcal{U}_{*}(RG)$
satisfies a group identity. Gon\c{c}alves and
Passman in \cite{GoPas} characterized when
$\mathcal{U}_{*}(RG)$ does not contain non abelian
free groups when $G$ is a finite group and $R$ is a
nonabsolute field. Giambruno and Sehgal, in
\cite{gs},  characterized when $(RG)_{*}^{-}$ is Lie
nilpotent provided  $R$ is a field of characteristic
$p\geq 0$, with $p\neq 2$.

Motivated by all these connections, in this paper we
deal with the question of when $(RG)^{-}_{\vp}$ is
commutative  for an arbitrary involution $\vp$ on
$G$. Let $G_{\varphi} =\{ g\in G \mid \varphi
(g)=g\}$ be the subset of $\varphi$-symmetric
elements of $G$, i.e. the set of elements of $G$
fixed by $\varphi$. The following complete answer is
obtained.

\begin{theorem}
Let $R$ be a commutative ring. Suppose $G$ is a
non-abelian group and $\vp$ is an involution on $G$.
Then, $(RG)^-_\vp$ is commutative if and only if one
of the following conditions holds:
\begin{enumerate}
\item
$K=\GEN{g\in G|\; g\not\in G_\vp}$ is abelian (and
thus  $G=K\cup Kx$, where $x\in G_\vp$, and
$\vp(k)=xkx^{-1}$ for all $k\in K$) and
$R_{2}^{2}=\{ 0 \}$.
\item
 $R_{2}=\{ 0\}$ and $G$ contains an abelian
subgroup of index $2$ that is contained in
$G_{\vp}$.
\item
$\Char(R)=4$, $|G'|=2$, $G/G'=(G/G')_\vp$, $g^2\in
G_\vp$ for all $g\in G$, and $G_{\vp}$ is
commutative in case  $R_{2}^{2}\neq \{ 0\}$.
\item $\Char (R)=3$, $|G'|=3$, $G/G'=(G/G')_\vp$ and
$g^3\in G_\vp$ for all $g\in G$.
\end{enumerate}
\end{theorem}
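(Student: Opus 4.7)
The plan is to verify both directions separately, with the heart of the argument being a systematic analysis of commutators among a fixed set of generators of $(RG)^-_\vp$. The first step is to identify natural $R$-module generators: writing $\alpha=\sum_g r_g g$, the condition $\vp(\alpha)=-\alpha$ translates into $r_{\vp(g)}=-r_g$ for all $g$, so $(RG)^-_\vp$ is $R$-generated by three families: (i) $g-\vp(g)$ for $g\in G\setminus G_\vp$, (ii) $rh$ with $h\in G_\vp$ and $r\in R_2$, and (iii) $r(g+\vp(g))$ with $g\in G\setminus G_\vp$ and $r\in R_2$. Note that families (ii) and (iii) disappear entirely when $R_2=\{0\}$, which already explains the clean dichotomy appearing in the statement.

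The ``if'' direction is then essentially a bookkeeping exercise: in each of the four cases one checks that every pairwise commutator of generators vanishes. In case (2) the generators of types (ii) and (iii) vanish because $R_2=\{0\}$, and the generators $g-\vp(g)$ satisfy $\vp(g)=xgx^{-1}$ for $g$ in the abelian subgroup of index $2$ contained in $G_\vp$, which makes all commutators $[g-\vp(g),h-\vp(h)]$ collapse. In case (1), $K$ abelian collapses commutators among type (i); the hypothesis $R_2^2=\{0\}$ is exactly what is needed to kill the cross terms coming from (ii) and (iii). In cases (3) and (4), $|G'|\in\{2,3\}$ together with the prescribed characteristics forces every commutator to live inside an annihilated subspace of $RG$, after which a direct computation closes the argument.

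For the ``only if'' direction, assume $(RG)^-_\vp$ is commutative. First I would expand the commutator $[g-\vp(g),h-\vp(h)]$ for $g,h\in G\setminus G_\vp$ and, by reading off coefficients in the canonical $R$-basis $G$, extract the structural consequences: in characteristic different from $2$ and $3$ this will force $K=\GEN{g\in G\mid g\notin G_\vp}$ to be abelian, while in small characteristics one obtains weaker relations of the form ``every commutator in $G$ has order dividing $p$'' that feed into cases (3) and (4). Once $K$ is abelian one examines the coset structure of $G$ modulo $K$; since $\vp$ permutes the nontrivial coset setwise, the element $x\in G\setminus K$ must be $\vp$-symmetric and $\vp$ must act on $K$ by conjugation by $x$. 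Then the commutators $[g-\vp(g),rh]$ and $[rh,r'h']$ with $h,h'\in G_\vp$, $r,r'\in R_2$ yield the dichotomy $R_2=\{0\}$ or $R_2^2=\{0\}$ separating cases (2) and (1), and the extra clause ``$G_\vp$ commutative when $R_2^2\neq\{0\}$'' in case (3) is forced by a final commutator calculation using family (iii).

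The main obstacle is the residual case where $R_2\neq\{0\}$ \emph{and} $K$ is nonabelian: here the coefficient relations in $R$ and the group-theoretic relations in $G$ become entangled, and I expect the bulk of the work to lie in showing that forcing all relevant commutators to zero leaves exactly the two possibilities $|G'|=2$ with $\Char(R)=4$ and $|G'|=3$ with $\Char(R)=3$. Concretely, one has to argue that $G/G'=(G/G')_\vp$ (so $\vp$ acts trivially on the abelianization), that $g^p\in G_\vp$ for all $g\in G$ where $p$ is the relevant characteristic, and that no further characteristic or structural possibility survives. The existing literature cited in the introduction, together with the companion paper in this series, should allow one to shortcut several subcases, but the intricate $R_2\neq\{0\}$ analysis will still require the delicate case-by-case commutator expansion described above.
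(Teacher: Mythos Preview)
Your case organization contains a structural error. You place the $\Char(R)=3$ outcome inside the ``residual case where $R_2\neq\{0\}$ and $K$ is nonabelian,'' but $\Char(R)=3$ forces $R_2=\{0\}$ (from $2r=0$ and $3r=0$ one gets $r=0$), so case~(4) cannot emerge from that branch at all. The paper handles $\Char(R)=3$ in a separate section with $R_2=\{0\}$, and the combinatorics there are substantially harder than your sketch suggests: expanding $[g-\vp(g),h-\vp(h)]=0$ in characteristic~$3$ yields \emph{six} possible relation patterns among $g,h,\vp(g),\vp(h)$ (versus two when $\Char(R)\neq 2,3$), and a chain of lemmas is needed to show that either every noncommuting pair in $G\setminus G_\vp$ falls into the pattern that produces cases~(1)/(2), or every such pair falls into one of the patterns that produce case~(4). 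Your outline does not anticipate this dichotomy or the work required to establish it.

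There is a second gap in the ``only if'' direction for $\Char(R)\neq 2,3$. You assert that reading off coefficients ``will force $K$ to be abelian,'' but the relation one actually extracts from $[g-\vp(g),h-\vp(h)]=0$ for noncommuting $g,h\notin G_\vp$ (namely $gh,hg\in G_\vp$ together with $g\vp(h)=h\vp(g)$ and $\vp(g)h=\vp(h)g$) does not by itself make $K$ abelian; the predecessor paper shows it leads to \emph{either} $K$ abelian \emph{or} an abelian index-$2$ subgroup contained in $G_\vp$, and both genuinely occur. Your description of the ``if'' direction for case~(2) reflects the same confusion: you write $\vp(g)=xgx^{-1}$ for $g$ in the abelian subgroup inside $G_\vp$, but those $g$ satisfy $g-\vp(g)=0$; the nontrivial type-(i) generators live in the other coset. (Minor point: your family~(iii) is redundant, since $2r=0$ gives $r(g+\vp(g))=r(g-\vp(g))$.)
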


Clearly, as an $R$-module, $(RG)_{\varphi}^-$ is
generated by the set
\begin{eqnarray*}
{\mathcal S} &=&\{g-\vp(g)  \mid  \ g\in G\setminus
G_\vp\}\cup\{rg\mid \;g\in G_\vp, \;r\in R_2\}
\end{eqnarray*}
Therefore $(RG)^{-}_{\vp}$ is commutative if and
only if the elements in $\mathcal{S}$  commute.

This work is a continuation of the work started in
\cite{osn2} (for the classical involution),
\cite{varphi} and \cite{jes-osn-ruiz}. In the latter
one considers the involutions $\eta$ on $RG$
introduced by Novikov in \cite{novikov}: $ \eta
(\sum\limits_{g\in G}\alpha_g g)= \sum\limits_{g\in
G} \alpha_g\sigma(g)g^{-1}$, where
$\sigma:G\rightarrow \{\pm 1\}$ is a group
homomorphism. Unfortunately, in \cite{osn2,varphi}
the set ${\mathcal S}_{1} = \{ rg \mid r\in R_{2},\;
g\in G_{\varphi}\}$  was not included in the set
${\mathcal S}$. Therefore, the results given in
\cite{osn2,varphi} only deal  with commuting of
elements in the set ${\mathcal S}\setminus {\mathcal
S_{1}}$. Hence, provided $R_{2}=\{ 0\}$, there is a
complete characterization of when $(RG)_{\vp}^{-}$
is commutative in \cite{varphi} when $\Char (R)\neq
2,3$ and in \cite{osn2} when $\vp$ is the classical
involution and $\Char (R)\neq 2$. The case $\Char
(R)=3$ was left as an open problem in \cite{varphi},
and the case $\Char (R)=2$ has been dealt with in
\cite{osn1,JR} because then $(RG)_{\varphi}^{-}$
coincides with the set of $\varphi$-symmetric
elements of $RG$.

So, throughout the paper we assume $\Char (R)\neq
2$. The center of $G$ is denoted by $Z(G)$, the
additive commutator $\alpha \beta -\beta \alpha$ of
$\alpha , \beta \in RG$ is denoted $[\alpha , \beta
]$, and the multiplicative commutator
$ghg^{-1}h^{-1}$ of $g,h\in G$ is denoted by
$(g,h)$.

As mentioned above, the Theorem has been proved in
\cite{varphi} provided $R_{2}=\{ 0\}$ and $\Char
(R)\neq 3$. Theorem~\ref{resultchar4} shows the
result holds in case $R_{2}\neq 0$ and
Theorem~\ref{resultchar3} shows that it also holds
if $\Char (R)=3$.

\section{Rings with elements of additive order 2}

We begin with recalling some technical results from
\cite{varphi}. The first lemma shows that the group
generated by the non-fixed elements has index at
most $2$.

\begin{lemma}{\em\cite[Lemma 2.3]{varphi}}\label{K}
If $\vp$ is non-trivial then the subgroup
$K=\GEN{g\in G\mid g\not\in G_\varphi}$ has index at
most $2$ in $G$.
\end{lemma}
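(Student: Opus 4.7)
The plan is to argue by contradiction: assuming $[G:K]\ge 3$, I will produce a non-$\vp$-fixed element which is nevertheless forced to be fixed. First I would record two preliminary observations about $K$. Since $\vp$ permutes the generating set $\{g\in G\mid g\notin G_\vp\}$ of $K$, the subgroup $K$ is $\vp$-invariant. Moreover, any $x\in G\setminus K$ must lie in $G_\vp$, for otherwise $x$ would itself be one of the generators of $K$.

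The engine of the proof is a simple ``conjugation lemma'': every $x\in G\setminus K$ realizes $\vp$ as conjugation on $G\setminus G_\vp$. To see this, take any $g\notin G_\vp$, so that $g\in K$. Then $xg\notin K$ (else $x=(xg)g^{-1}\in K$), so $xg\in G\setminus K\subseteq G_\vp$. Comparing $\vp(xg)=\vp(g)\vp(x)=\vp(g)\,x$ with $\vp(xg)=xg$ gives $\vp(g)=xgx^{-1}$, as claimed.

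With this in hand, suppose $[G:K]\ge 3$. Choose $x,y\in G\setminus K$ representing two distinct non-trivial cosets of $K$, so that $z:=y^{-1}x\notin K$. Applying the conjugation lemma to $z$ yields $\vp(g)=zgz^{-1}$ for every $g\notin G_\vp$. Applying it separately to $x$ and $y$ yields $xgx^{-1}=\vp(g)=ygy^{-1}$, which rearranges (since $z^{-1}=x^{-1}y$) to $zgz^{-1}=g$. Combining the two identities gives $\vp(g)=g$, contradicting the non-triviality of $\vp$, since such a $g$ exists by hypothesis. Hence $[G:K]\le 2$.

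The main obstacle, as I see it, is spotting that the ``difference'' element $z=y^{-1}x$ plays a dual role: it lies outside $K$, which lets it serve as a conjugator realizing $\vp$, while simultaneously it centralizes every $g\notin G_\vp$ because $x$ and $y$ both conjugate $g$ to the same element $\vp(g)$. Once this dual role is identified the contradiction is immediate. A minor technical point is that we have not established normality of $K$ in advance, so one must be a little careful about left versus right cosets when selecting $x,y$; this nuisance disappears at the end, because any subgroup of index at most $2$ is automatically normal.
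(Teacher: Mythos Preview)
Your argument is correct. The paper does not actually supply a proof of this lemma; it merely quotes it from \cite[Lemma~2.3]{varphi}, so there is no in-paper proof to compare against. That said, your approach is exactly the natural one and coincides with the standard argument: the key observation is the ``conjugation lemma'' that every $x\in G\setminus K$ satisfies $\vp(g)=xgx^{-1}$ for all $g\notin G_\vp$, and from there the existence of two distinct non-trivial cosets immediately forces $\vp$ to be trivial. Your caveat about left versus right cosets is well placed but, as you note, harmless: choosing $x,y$ from distinct non-trivial \emph{left} cosets guarantees $z=y^{-1}x\notin K$, which is exactly what the conjugation lemma needs.
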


\begin{lemma}{\em\cite[Lemma 1.1 and Lemma 1.2]{varphi}}\label{lema1}
Let $R$ be a commutative ring with $\Char (R) \neq
2,3$. Let $g,h\in G\setminus G_{\vp}$ be two
non-commuting elements. If $(RG)^{-}_{\vp}$ is
commutative then one of the following conditions
holds
\begin{enumerate}
\item $gh\in G_{\vp}$, $hg\in G_{\vp}$, $h\vp (g)=g\vp (h)$ and
$\vp (h)g=\vp (g) h$.
\item $gh=h\vp(g)=\vp(h)g=\vp(g)\vp(h)$ and $\Char(R)=4$.
\end{enumerate}
\end{lemma}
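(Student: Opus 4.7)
The plan is to extract both conclusions of the lemma from the single Lie identity $[g-\vp(g),\,h-\vp(h)]=0$, which is forced by commutativity of $(RG)^-_\vp$, and then disentangle the resulting vanishing relation in $RG$ combinatorially.

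First I would expand the bracket to obtain
\[
gh + h\vp(g) + \vp(h)g + \vp(g)\vp(h) \;=\; hg + g\vp(h) + \vp(g)h + \vp(h)\vp(g)
\]
as an identity in $RG$, and denote by $S_+$, $S_-$ the multisets of group elements on the left and right respectively; note that $\vp$ restricts to a bijection $S_+\to S_-$. For each $x\in G$ the net coefficient $m_+(x)-m_-(x)$ must vanish in $R$, where $m_\pm(x)$ is the multiplicity of $x$ in $S_\pm$. Since $|m_+(x)-m_-(x)|\le 4$ and $\Char R\notin\{2,3\}$, the only possibilities are \emph{Case I}, $S_+=S_-$ as multisets, or \emph{Case II}, $\Char R=4$ with $S_+=\{y,y,y,y\}$ and $S_-=\{z,z,z,z\}$ for some $y\ne z$. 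The hypothesis $\Char R\ne 3$ is precisely what rules out intermediate collapse patterns with net coefficient $\pm 3$.

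Case II unwinds at once to $gh=h\vp(g)=\vp(h)g=\vp(g)\vp(h)$ together with $\Char R=4$, which is conclusion~(2). In Case~I I would use $g,h\notin G_\vp$ and $gh\ne hg$ to trace the forced coincidences: $gh\in S_+$ can match an element of $S_-$ only via $gh=\vp(h)\vp(g)$ (the three other candidates each force $g\in G_\vp$, $h\in G_\vp$, or $gh=hg$), hence $gh\in G_\vp$; the symmetric argument applied to $\vp(g)\vp(h)\in S_+$ yields $hg\in G_\vp$. The residual identity $\{h\vp(g),\,\vp(h)g\}=\{g\vp(h),\,\vp(g)h\}$ then admits only two patterns: the \emph{aligned} one $h\vp(g)=g\vp(h)$ and $\vp(h)g=\vp(g)h$, which is conclusion~(1); and the \emph{crossed} one $h\vp(g)=\vp(g)h$, $g\vp(h)=\vp(h)g$.

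The main obstacle is to eliminate the purely crossed possibility. I would first combine $gh=\vp(h)\vp(g)$ and $hg=\vp(g)\vp(h)$ with the crossed commuting identities via $\vp$-chasing and conjugation to try to derive one of the aligned identities directly. If this is not enough, I would invoke a second vanishing bracket in $(RG)^-_\vp$, such as $[g-\vp(g),\,gh^{-1}-\vp(gh^{-1})]$, which is available whenever $gh^{-1}\notin G_\vp$; and since $gh^{-1}\in G_\vp$ is equivalent to $\vp(h)g=\vp(g)h$, in either case conclusion~(1) drops out. The characteristic hypotheses $\Char R\ne 2,3$ enter decisively at the coefficient-counting step, which is the crux of the argument.
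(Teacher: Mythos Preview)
The paper does not prove this lemma itself; it quotes it from \cite{varphi} as the conjunction of Lemmas~1.1 and~1.2 there. Your expansion of the bracket and the coefficient-counting dichotomy (Case~I versus Case~II) are correct and essentially reconstruct Lemma~1.1 of \cite{varphi}: in characteristic $\ne 2,3$ the only non-zero collapse pattern is $+4/{-4}$, yielding conclusion~(2), and otherwise the multiset equality $S_+=S_-$ forces $gh,hg\in G_\vp$ together with the residual relation $\{h\vp(g),\vp(h)g\}=\{g\vp(h),\vp(g)h\}$.

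The genuine gap is the one you yourself flag: the purely crossed subcase ($h\vp(g)=\vp(g)h$, $g\vp(h)=\vp(h)g$, $gh,hg\in G_\vp$, but $h\vp(g)\ne g\vp(h)$) is not eliminated. Your dichotomy on $gh^{-1}$ handles one branch cleanly---if $gh^{-1}\in G_\vp$ then $\vp(h)g=\vp(g)h$, and the residual multiset equality then forces $h\vp(g)=g\vp(h)$, giving~(1). But on the other branch you merely propose to re-run the whole argument on the pair $(g,gh^{-1})$, with no mechanism offered to prevent landing back in the crossed subcase. Nothing in the sketch terminates this recursion or extracts a contradiction from it. That this elimination is substantive rather than a formality is visible already in the present paper: the characteristic-$3$ analogue is exactly Lemma~\ref{onethentwo}, whose proof introduces the auxiliary element $\vp(g)hg$, invokes Lemma~\ref{lemg2}, and runs through a page of further case analysis. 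The $\Char R\ne 2,3$ version is precisely the content of Lemma~1.2 in \cite{varphi}, and your proposal does not reproduce it. To close the gap you must actually carry out this elimination---for instance by adapting the argument of Lemma~\ref{onethentwo}, which simplifies once conditions~(3)--(6) of Lemma~\ref{lema2} are unavailable---rather than asserting that ``conclusion~(1) drops out''.
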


Note that,  if non-commuting elements $g,h\in
G\setminus G_{\vp}$ satisfy condition $(2)$ in the
lemma then $h^{-1}gh=\vp (g)$. Non-commutative
groups $G$  with an involution $\varphi$ such that
$h^{-1}gh\in \{ g,\; \varphi (g)\}$ for all $g,h\in
G$ have been described in
\cite[Theorem~III.3.3]{eric}. These are precisely
the groups $G$ with a unique non-trivial commutator
and that satisfy the {\it lack of commutativity
property} (``LC'' for short). The latter means that
for any pair of elements $g,h\in G$ it is the case
that $gh=hg$ if and only if either $g\in Z(G)$ or
$h\in Z(G)$ or $gh\in Z(G)$. It turns out
\cite[Proposition III.3.6]{eric} that such groups
are precisely those non-commutative groups  $G$ with
$G/Z(G)\cong C_{2}\times C_{2}$, where $C_{2}$
denotes the cyclic group of order $2$.

In the next lemma we give the  structure of the
group generated by two elements $g,h\in G\setminus
G_{\vp}$  satisfying $(2)$ of Lemma~\ref{lema1}.

\begin{lemma}{\em\cite[Lemma 3.1]{varphi}}\label{lema4}
Let $R$ be a commutative ring with $\Char(R)=4$.
Suppose $g,h\in G\setminus G_{\vp}$ are
non-commuting elements that  satisfy $(2)$ of
Lemma~\ref{lema1}. If $(RG)^{-}_\varphi$ is
commutative then the group
$H=\GEN{g,h,\varphi(g),\varphi(h)}=\GEN{g,h}$
satisfies the $LC$--property and has a unique
non-trivial commutator $s$  and the involution
restricted to $H$ is given by $\vp(h)=sh$ if $h\in
H\setminus Z(H)$ and $\vp (h)=h$ if $h\in Z(H)$.
\end{lemma}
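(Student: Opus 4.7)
The plan is to proceed in three stages: first I derive explicit formulas for $\vp(g)$ and $\vp(h)$ in terms of $s=(g,h)=ghg^{-1}h^{-1}$; then I combine $\vp^{2}=\mathrm{id}$ with commutativity of $(RG)^{-}_{\vp}$ to force $s\in Z(H)$ and $s^{2}=1$; finally I read off the structure of $H$ and the description of $\vp|_{H}$.

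In the first stage, Lemma~\ref{lema1}(2) immediately gives $\vp(h)=ghg^{-1}=sh$, and combining this with $\vp(g)\vp(h)=gh$ yields $\vp(g)=gh(sh)^{-1}=gs^{-1}$. Since also $\vp(g)=h^{-1}gh$, both $\vp(g)$ and $\vp(h)$ lie in $\langle g,h\rangle$, so $H=\langle g,h,\vp(g),\vp(h)\rangle=\langle g,h\rangle$ and $\vp$ restricts to an involution on $H$. Applying $\vp$ to $\vp(g)=h^{-1}gh$ I compute
\[
g=\vp^{2}(g)=\vp(h)\vp(g)\vp(h)^{-1}=(sh)(h^{-1}gh)(h^{-1}s^{-1})=sgs^{-1},
\]
so $sg=gs$; the parallel computation $\vp^{2}(h)=\vp(g)^{-1}\vp(h)\vp(g)$ starting from $\vp(h)=ghg^{-1}$ yields $sh=hs$, whence $s\in Z(H)$.

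The key step uses the commutativity of $(RG)^{-}_{\vp}$. With $s$ central, $\alpha=g-\vp(g)=g(1-s^{-1})$ and $\beta=h-\vp(h)=(1-s)h$ satisfy $\alpha\beta=(2-s-s^{-1})gh$ and $\beta\alpha=(2-s-s^{-1})hg$, so the hypothesis translates into $(2-s-s^{-1})(gh-hg)=0$ in $RG$. Using $gh=shg$ (so $hg=ghs^{-1}$, $hgs=gh$ and $hgs^{-1}=ghs^{-2}$) and $\Char(R)=4$ to cancel the $2gh$ and $2hg$ terms, the relation reduces to
\[
-gh-ghs+ghs^{-1}+ghs^{-2}=0\quad\text{in }RG.
\]
The four group elements lie in the $\langle s\rangle$-coset of $gh$ with $\pm 1$ coefficients; since $-1\neq 0$ in $R$, a case analysis on $|s|$ rules out $|s|=1$ (excluded because $g,h$ do not commute), $|s|=3$ (which leaves a residue $(s^{2}-1)gh\neq 0$), and every $|s|\geq 4$ (four distinct coset representatives with non-zero coefficients), forcing $s^{2}=1$.

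Once $s\in Z(H)$ and $s^{2}=1$, every element of $H$ has a unique normal form $g^{a}h^{b}s^{c}$ with $a,b\in\Z$ and $c\in\{0,1\}$, and a direct check gives $Z(H)=\langle g^{2},h^{2},s\rangle$, comprising exactly those normal forms with both $a$ and $b$ even; thus $H/Z(H)\cong C_{2}\times C_{2}$, $H'=\langle s\rangle$, the unique non-trivial commutator in $H$ is $s$, and the $LC$-property follows. The description of $\vp|_{H}$ then results from a direct computation: using $\vp(g)=sg$, $\vp(h)=sh$ (and $\vp(s)=s$ since $s^{2}=1$) together with the normal form, one finds $\vp(g^{a}h^{b}s^{c})=s^{a+b+ab}\cdot g^{a}h^{b}s^{c}$, which equals $g^{a}h^{b}s^{c}$ exactly when both $a,b$ are even (i.e.\ on $Z(H)$) and equals $s\cdot g^{a}h^{b}s^{c}$ otherwise. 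The principal obstacle will be the characteristic-$4$ step: only the simultaneous use of centrality of $s$, the relations forced by $s=(g,h)$, and the cancellation $2(gh-hg)=0$ reduces the commutator equation to a form isolating $s^{2}=1$.
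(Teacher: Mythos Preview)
The paper does not supply its own proof of this lemma; it is quoted from \cite[Lemma~3.1]{varphi}, so there is no in-paper argument to compare against. Your argument is essentially correct and self-contained: the derivation of $\vp(g)=gs^{-1}$, $\vp(h)=sh$ from condition~(2), the use of $\vp^{2}=\mathrm{id}$ to obtain $s\in Z(H)$, and the characteristic-$4$ reduction of $[g-\vp(g),h-\vp(h)]=0$ to $(-1-s+s^{-1}+s^{-2})gh=0$, forcing $s^{2}=1$ by the case analysis on $|s|$, are all sound. (A small phrasing issue: the terms $2gh$ and $2hg$ do not ``cancel'' in characteristic $4$; rather you are using $3=-1$ after substituting $hg=ghs^{-1}$, which does yield the displayed expression.)

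One overstatement to fix: the claim that every element of $H$ has a \emph{unique} normal form $g^{a}h^{b}s^{c}$ is not justified, since $g$ and $h$ may satisfy further relations inside the ambient group $G$ (e.g.\ $g$ could have finite order). What is true, and what you actually need, is that every element \emph{can} be written as $g^{a}h^{b}s^{c}$ and that the parities of $a$ and $b$ are invariants of the element: if $g^{a}h^{b}s^{c}=g^{a'}h^{b'}s^{c'}$, commuting both sides with $h$ gives $s^{a}=s^{a'}$, and commuting with $g$ gives $s^{b}=s^{b'}$. With this, your identification of $Z(H)$ with the elements having $a,b$ even goes through, $H/Z(H)\cong C_{2}\times C_{2}$ follows, and the formula $\vp(g^{a}h^{b}s^{c})=s^{a+b+ab}\,g^{a}h^{b}s^{c}$ gives the asserted description of $\vp|_{H}$. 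Alternatively you can avoid normal forms entirely: once $s\in Z(H)$ and $s^{2}=1$, one checks $g^{2},h^{2}\in Z(H)$ directly, $H'=\langle s\rangle\subseteq Z(H)$ makes $H/Z(H)$ abelian and generated by two elements of order $2$, and $\bar g,\bar h,\overline{gh}$ are nontrivial and pairwise distinct, giving $|H/Z(H)|=4$.
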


%

From the next lemma it follows that if $R_{2}\neq \{
0\}$ and $\Char (R)\neq 4$ then any two elements of
$G\setminus G_{\vp}$ that satisfy condition $(1)$ of
Lemma~\ref{lema1} must commute.

\begin{lemma}\label{charpar}
Assume $R_{2}\neq \{ 0\}$ and $(RG)_{\vp}^{-}$ is
commutative. Let $g,h\in G$ and suppose $(g,h)\neq
1$.
\begin{enumerate}
\item If $g\in G_\vp$ and
 $h\not\in G_\vp$ then $gh=\vp(h)g$ and $hg=g\vp(h)$.
\item If $g,h\in G\setminus G_{\vp}$ then $gh\not\in
G_{\vp}$ (in particular, $g$ and $h$ do not satisfy
condition $(1)$ of Lemma~\ref{lema1}).
\item
If $g,h\in G\setminus G_\vp$ then $\Char(R)=4$,
$\GEN{g,h}$ is LC with a unique non-trivial
commutator and $\vp(g)=(g,h)g$ and $\vp(h)=(g,h)h$.
In particular, if $\Char(R)\neq 4$ then $K=\GEN{g\in
G\mid g\not\in G_\vp}$ is abelian.
\end{enumerate}
\end{lemma}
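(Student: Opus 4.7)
The decisive new input from the hypothesis $R_{2}\ne\{0\}$ is that for every $g\in G_{\vp}$ and every non-zero $r\in R_{2}$, the element $rg$ lies in $(RG)^{-}_{\vp}$. Commuting $rg$ with $h-\vp(h)$, for any $h\in G\setminus G_{\vp}$, yields the identity
\[
r\bigl(gh-g\vp(h)-hg+\vp(h)g\bigr)=0
\]
in $RG$. Since $2r=0$ forces $-r=r$, all four coefficients reduce to $r\ne 0$, so every distinct group element appearing in the multiset $\{gh,\,g\vp(h),\,hg,\,\vp(h)g\}$ must occur an even number of times. This identity is the engine that drives all three parts.

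For part (1), I carry out the combinatorial case analysis: among the three ways to partition the four elements into two pairs, the matching $\{gh,g\vp(h)\}\cup\{hg,\vp(h)g\}$ forces $h=\vp(h)$, and $\{gh,hg\}\cup\{g\vp(h),\vp(h)g\}$ forces $(g,h)=1$, each contradicting a hypothesis. The only surviving option is $gh=\vp(h)g$ together with $g\vp(h)=hg$, which are precisely the desired relations. Cases in which some element has odd multiplicity (all four equal, three equal and one different, or all four distinct) are disposed of similarly. I expect this enumeration to be the main obstacle of the proof, since the two relations produced here are what every subsequent step bootstraps from.

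For part (2), I argue by contradiction: assume $g,h\in G\setminus G_{\vp}$ satisfy $(g,h)\ne 1$ and $gh\in G_{\vp}$. One checks immediately that $h^{-1}\notin G_{\vp}$ and that $(gh,h^{-1})\ne 1$ (the latter is equivalent to $(g,h)\ne 1$), so part (1) applies to the pair $gh\in G_{\vp}$, $h^{-1}\notin G_{\vp}$, giving $(gh)h^{-1}=\vp(h^{-1})(gh)$, that is $\vp(h)=ghg^{-1}$. Substituting this relation into $gh=\vp(gh)=\vp(h)\vp(g)$ collapses to $\vp(g)=g$, the required contradiction.

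For part (3), I invoke Lemma~\ref{lema1}, which is available because $R_{2}\ne\{0\}$ forces $\Char(R)\ne 3$. Part (2) of the present lemma rules out condition~(1) of Lemma~\ref{lema1}, so condition~(2) must hold and hence $\Char(R)=4$. Lemma~\ref{lema4} then identifies $H=\GEN{g,h}$ as an LC-group with a unique non-trivial commutator $s$, where $\vp(x)=sx$ for $x\in H\setminus Z(H)$. Since $(g,h)\ne 1$, neither $g$ nor $h$ lies in $Z(H)$, so $\vp(g)=sg$ and $\vp(h)=sh$; and because $(g,h)$ is itself a non-trivial commutator of $H$, we must have $s=(g,h)$. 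Finally, if $\Char(R)\ne 4$ then both alternatives of Lemma~\ref{lema1} are excluded, so every pair of elements of $G\setminus G_{\vp}$ commutes, whence the generators of $K$ commute pairwise and $K$ is abelian.
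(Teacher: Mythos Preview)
Your proof is correct and follows essentially the same route as the paper: you compute the same commutator $[rg,h-\vp(h)]$ for part~(1) (the paper's argument is just a terse version of your case analysis), you reduce part~(2) to part~(1) via the symmetric element $gh$ exactly as the paper does (using $h^{-1}$ rather than $h$ as the non-symmetric partner, which is an immaterial variation), and your derivation of part~(3) from Lemma~\ref{lema1} and Lemma~\ref{lema4} matches the paper's one-line appeal to those results.
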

\begin{proof}
Let $0\neq r\in R_{2}$.

$(1)$   Since $(RG)_{\vp}^{-}$ is commutative, we
have that $0=[rg,
h-\vp(h)]=r(gh+g\vp(h)+hg+\vp(h)g)$. As $(g,h)\neq
1$ and $h\not\in G_{\vp}$, it follows that
$gh=\vp(h)g$ and $hg=g\vp(h)$.

$(2)$ Suppose  $g,h\in G\setminus G_{\vp}$. Assume
$gh \in G_\vp$. Then, by $(1)$,
$\vp(h)\vp(g)h=ghh=\vp(h)gh$ and thus $g\in G_\vp$,
a contradiction.

(3) This follows at once from  Lemma~\ref{lema1},
$(2)$ and Lemma~\ref{lema4} .
\end{proof}

We now give a  complete characterization of when
$(RG)^-_\vp$ is commutative provided $R_{2}\neq \{
0\}$ (and thus $\Char (R)\neq 3$).

\begin{theorem} \label{resultchar4}
Let $R$ be a commutative ring with  elements of
additive order $2$. Assume  $G$ is  a non-abelian
group and $\varphi$ is an involution on $G$. Then,
$(RG)^-_\vp$ is commutative if and only if one of
the following conditions holds:
\begin{enumerate}
\item[(a)] \label{KK}
$K=\GEN{g\in G|\; g\not\in G_\vp}$ is abelian (and
thus $G=K\cup Kx$, where $x\in G_\vp$, and
$\vp(k)=xkx^{-1}$ for all $k\in K$), and
  $R_{2}^{2}= \{ 0\}$.
\item[(b)]
$\Char(R)=4$, $|G'|=2$, $G/G'=(G/G')_\vp$, $g^2\in
G_\vp$ for all $g\in G$, and $G_{\vp}$ is
commutative in case  $R_{2}^{2}\neq \{ 0\}$.
\end{enumerate}
\end{theorem}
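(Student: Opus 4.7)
For sufficiency, the plan is to check directly that the generating set $\mathcal{S}$ is pairwise commuting under each of (a), (b). Under (a), the bracket $[g-\vp(g),h-\vp(h)]$ with $g,h\in K\setminus G_\vp$ vanishes because $K$ is abelian and contains $g,h$ as well as $\vp(g)=xgx^{-1}$ and $\vp(h)=xhx^{-1}$; the bracket $[rg,r'h]$ with $g,h\in G_\vp$ vanishes by $R_2^2=\{0\}$; and the mixed bracket $[g-\vp(g),rk'x]$ collapses once one uses that $x^2\in K$ is central in $G$, so that $x\vp(g)=x^2gx^{-1}=gx$, leaving a coefficient $2r=0$. Under (b), writing $g-\vp(g)=g(1-s)$ and using $(1-s)^2=2(1-s)$ together with $\Char(R)=4$ makes every bracket involving two non-fixed elements a multiple of $4(1-s)=0$, and the mixed brackets a multiple of $2r=0$; only $[rg,r'h]$ with $g,h\in G_\vp$ remains, and this vanishes either by $R_2^2=\{0\}$ or by commutativity of $G_\vp$.

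For necessity when $\Char(R)\ne 4$, Lemma~\ref{charpar}(3) forces $K$ abelian, and non-abelianness of $G$ together with Lemma~\ref{K} gives $[G:K]=2$; write $G=K\sqcup Kx$ with $x\in G_\vp$. The structural core of (a) is then automatic: every non-fixed element lies in $K$ by the definition of $K$, so $Kx\subseteq G_\vp$, and for $k\in K$ this forces $\vp(kx)=kx$, hence $\vp(k)=x^{-1}kx$; the involution condition implies $x^2\in Z(G)$, giving $x^{-1}kx=xkx^{-1}$. Because $G$ is non-abelian, $Kx$ contains non-commuting pairs $g,g'\in G_\vp$, and $[rg,r'g']=rr'(gg'-g'g)$ vanishes iff $rr'=0$ for every $r,r'\in R_2$, forcing $R_2^2=\{0\}$. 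This yields (a).

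For necessity when $\Char(R)=4$, I split on whether $K$ is abelian. If it is, the previous argument yields (a). If $K$ is non-abelian, pick non-commuting $g,h\in G\setminus G_\vp$; Lemma~\ref{charpar}(3) and Lemma~\ref{lema4} provide a unique non-trivial commutator $s=(g,h)$ on $H=\GEN{g,h}$ with $\vp(g)=sg$ and $\vp(h)=sh$. I would then propagate this to all of $G$: for $y\in G\setminus G_\vp$ not commuting with $g$, Lemma~\ref{charpar}(3) applied to $(y,g)$ yields $\vp(g)=(y,g)g$, so $(y,g)=s$ and $\vp(y)=sy$; if $y$ commutes with both $g$ and $h$, I pass to $y':=yg$, verify $(y',h)=s\neq 1$, and note that $y'\notin G_\vp$ would (via Lemma~\ref{charpar}(3)) give $\vp(y)=g^{-1}yg=y$, contradicting $y\notin G_\vp$, so $y'\in G_\vp$, which unwinds to $\vp(y)=sy$. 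A parallel case analysis for $a,b\in G$ (distinguishing whether each lies in $G_\vp$) shows every commutator $(a,b)$ lies in $\{1,s\}$, whence $G'=\{1,s\}$, $G/G'=(G/G')_\vp$, and $g^2\in G_\vp$ (since $\vp(g)^2=(sg)^2=g^2$); the condition on $G_\vp$ follows from the same $[rg,r'g']$ argument. The main obstacle is precisely this propagation step---extending the local LC/unique-commutator structure from $\GEN{g,h}$ to all of $G$, especially handling elements $y$ which centralize $\GEN{g,h}$ via the auxiliary product $yg$.
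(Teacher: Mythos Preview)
Your proposal is correct and follows essentially the same route as the paper: Lemma~\ref{charpar}(3) to force $K$ abelian (giving (a)) when $\Char(R)\ne 4$, and in the $\Char(R)=4$ non-abelian-$K$ case, the same propagation argument---pick a non-commuting pair $g,h\in G\setminus G_\vp$, use Lemma~\ref{charpar}(3)/Lemma~\ref{lema4} to get the local LC structure with commutator $s$, then spread $\vp(y)=sy$ to every $y\notin G_\vp$ by casework on whether $y$ commutes with $g$ (or $h$), handling the centralising case via the auxiliary product $yg$. Your treatment of that last subcase (showing $yg\notin G_\vp$ forces $\vp(y)=y$, hence $yg\in G_\vp$, hence $\vp(y)=sy$) is exactly the paper's argument with the roles of the ``fixed pair'' and the ``roaming element'' interchanged. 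The derivation of $G'=\{1,s\}$ by the three cases $a,b\in G_\vp$ / mixed / both non-fixed, and the $R_2^2$ argument for commutativity of $G_\vp$, also match the paper.

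The one place you differ is cosmetic but worth noting: in the sufficiency under (b) you write $g-\vp(g)=g(1-s)$ and exploit $(1-s)^2=2(1-s)$, which makes the three bracket checks drop out of a single identity (multiples of $4$ or of $2r$). The paper instead expands each bracket term-by-term using the relations $g\vp(h)=hg$, $\vp(g)h=hg$, etc. Your factorisation is cleaner and avoids the slightly awkward appeal the paper makes to Lemma~\ref{charpar}(1) inside the sufficiency direction. Similarly, under (a) you justify $x\vp(g)=gx$ via $x^2\in Z(G)$ (from $\vp^2=\mathrm{id}$), which is a self-contained replacement for that same citation.
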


\begin{proof}
 Let $G$ be a non-abelian group and $\vp$ an
involution on $G$. Assume  $(RG)_{\vp}^{-}$ is
commutative. Notice that Lemma~\ref{K} implies that
if $K=\GEN{g\in G|\; g\not\in G_\vp}$ is abelian
(and thus $K\neq G$) then $G=K\cup Kx$ for some
$x\in G_{\vp}$. Furthermore, one gets that
$\vp(k)=xkx^{-1}$ for all $k\in K$. Indeed, since
$x\not\in K$ it follows that $xk\not\in K$ and hence
$xk=\vp(xk)=\vp(k)x$ and therefore
$\vp(k)=xkx^{-1}$.   Also, since $x$ is not central,
we get that $xk\neq kx$ for some $k\in K$. Now, for
any $r_{1},r_{2}\in R_{2}$ we have that $x,kx\in
G_{\vp}$ and thus, by assumption,
$r_{1}r_{2}(xkx-kx^{2})=[r_{1}x,r_{2}kx]=0$. Since
$xkx\neq kx^{2}$, it follows that $r_{1}r_{2}=0$.
Consequently, $R_{2}^{2}=\{ 0\}$. So, condition (a)
follows.

If $\Char (R)\neq 4$ then it follows from
Lemma~\ref{charpar}.(3) that $K$ is abelian. Hence,
by the above, condition (a) follows.

So, to prove the necessity of the mentioned
conditions, we are left to deal with the case that
$\Char (R)=4$ and  $K$ is not abelian. We need to
prove that condition (b) holds. Because of
Lemma~\ref{charpar} $(3)$, we also know that
$H=\GEN{x,y}$ is LC with a unique non-trivial
commutator and $\vp (h)=(x,y)h$ if $h\in H\setminus
Z(H)$ and $\vp (h)=h$ if $h\in Z(H)$.

Now we claim  that for all $g\in G\setminus G_\vp$
we have that $g^2\in G_\vp$ and $g^{-1}\vp(g)=(x,y)$
(in particular, $G/G' = (G/G')_{\vp}$). Indeed, let
$g\in G\setminus G_\vp$. If $(g,x)\neq 1$  then by
Lemma~\ref{charpar} $(3)$ $g^2\in G_\vp$ and
$g^{-1}\vp(g)=x^{-1}\vp(x)=(x,y)$. Similarly, if
$(g,y)\neq 1$ then $g^{-1}\vp (g) = y^{-1} \vp (y) =
(x,y)$. Assume now that $(g,x)=(g,y)=1$. If $gx\in
G_\vp$ then $g^2x^2 = (gx)^2 = \vp((gx)^2) =
\vp(g^2x^2) = \vp(g^2)x^2$ and hence $g^2\in G_\vp$.
Moreover, in this case, $gx=\vp(g)\vp(x)$ and hence
$g^{-1}\vp(g)=x\vp(x^{-1})=x^{-1}\vp(x)=(x,y)$ as
desired. If $gx\not\in G_{\vp}$ then, by
Lemma~\ref{charpar} $(3)$ and since $(gx,y)=(x,y)
\neq 1$, we get that $(gx)^{-1}\vp (gx) = (gx,y) =
(x,y) =x^{-1}\vp (x)$. Hence, $x^{-1}g^{-1}\vp (x)
\vp (g) =x^{-1}\vp (x)$ and thus $g^{-1} \vp (g) \vp
(x) = g^{-1} \vp (x) \vp (g) =\vp (x)$. So, $g=\vp
(g)$, a contradiction. This finishes the proof of
the claim.

Next we show that $G'=\GEN{(x,y)}=\{ 1, (x,y)\}$
(and thus $G'\subseteq G_{\vp}$). Indeed, let
$g,h\in G$ such that $(g,h)\neq 1$. If $g,h\not\in
G_\vp$ then by the previous claim and
Lemma~\ref{charpar} $(3)$ it follows that $(g,h) =
g^{-1}\vp(g) = x^{-1}\vp(x) = (x,y)$, as desired. If
$g\in G_\vp$ and $h\not\in G_\vp$ then, by
Lemma~\ref{charpar} $(1)$, $gh=\vp(h)g$ and hence by
the previous claim we get that $(x,y) = \vp(x)x^{-1}
= \vp(h)h^{-1} = ghg^{-1}h^{-1} = (g,h)$. Finally if
$g,h\in G_\vp$ then $hg\not\in G_\vp$ (because
otherwise $(g,h)=1$), and hence by the previous
claim $(x,y) = \vp(x) x^{-1} = \vp ( (hg)) (hg)^{-1}
= gh g^{-1}h^{-1} = (g,h)$, as desired.

To finish the prove of the necessity, we remark that
if $R_{2}^{2}\neq \{ 0\}$ then $G_{\vp}$ is
commutative. Indeed, let $r_{1},r_{2}\in R_{2}$ be
so that $r_{1}r_{2}\neq 0$ and let $g_{1},g_{2}\in
G_{\vp}$. Since $(RG)_{\vp}^{-}$ is commutative, we
have that $r_{1}r_{2}(g_{1}g_{2}-g_{2}g_{1}) =
[r_{1}g_{1},r_2 g_{2}]=0$. Hence $(g_{1},g_{2})=1$.

In order to prove the sufficiency we need  to show
that the elements in $$\mathcal{S}=\{g-\vp(g)\mid
g\in G, \;g\not\in G_\vp\; \}\cup\{rg\mid \;g\in
G_\vp,\;r\in R_2 \}$$ commute.

First assume $G$ satisfies condition $(a)$. So
$G=K\cup Kx$ with $x\in G_{\vp}$ and $K$ abelian. We
need to show that $[g-\vp (g), r_{1}h_{1}]=0$ and
$[r_{1}h_{1},r_{2}h_{2}]=0$ for   $g\in G\setminus
G_{\vp}$, $h_{1},h_{2} \in G_{\vp}$ and $r_{1},r_{2}
\in R_{2}$ with $(g,h_{1})\neq 1$ and
$(h_{1},h_{2})\neq 1$. The later equality is
obviously satisfied because of the assumptions. To
prove the former equality, we note that, by
Lemma~\ref{charpar} $(1)$, $h_{1}g=\vp(g)h_{1}$ and
$gh_{1}=h_{1}\vp(g)$. Hence,
\begin{eqnarray*}
[g-\vp(g),r_1h_{1}]
 &=&
r_1gh_{1}-r_1\vp(g)h_{1} - r_1h_{1} g + r_1h_{1}
\vp(g)\\ &=& r_1gh_{1} + r_1h_{1}g + r_1h_{1}g +
r_1gh_{1} \\&=& 2r_1gh_{1} + 2r_1h_{1}g\\& =& 0,
\end{eqnarray*} as desired.

Second, assume  $G$ satisfies  $(b)$ and that
$\Char(R)=4$. Notice that in this case if $g\not \in
G_\vp$ then $g^{-1}\vp(g)=g\vp(g^{-1})$ is central
and equal to the unique commutator of $G$. Let
$g,h\in G$ with $(g,h)\neq 1$ and let
$r_{1},r_{2}\in R_{2}$. If $g,h\in G_{\vp}$, then
the assumptions imply that $R_{2}^{2}=\{ 0\}$ and
thus $[r_{1}g,r_{2}h]=0$, as desired.  If $g\not\in
G_\vp$ and $h\in G_\vp$ then $$
\begin{array}{lcl}
[g-\vp(g),r_{1}h]&=&
r_{1}gh-r_{1}\vp(g)h-r_{1}hg+r_{1}h\vp(g)\\
 &=&r_{1}gh+r_{1}hg\vp(g^{-1})
\vp(g)+r_{1}hg-r_{1}\vp(g)\vp(g^{-1})gh\\
&=&r_{1}gh+r_{1}hg+r_{1}hg+r_{1}gh\\
 &=&2r_{1}hg+2r_{1}gh=0.
\end{array}
$$ Finally, if $g,h\in G\setminus G_{\vp}$ then $$
\begin{array}{lcl}
[g-\vp(g),h-\vp(h)]&=&gh-g\vp(h)-\vp(g)h+\vp(g)\vp(h)-hg+h\vp(g)+\vp(h)g-\vp(h)\vp(g)\\
&=&gh-hg-hg+gh-hg+gh+gh-hg=4gh-4hg=0
\end{array}
$$ which finishes the proof of the theorem.
\end{proof}

For the classical involution $*$ on $G$ we get the
following consequence.

\begin{corollary}
Let $R$ be a commutative ring with  elements of
additive order $2$. Let $G$ be a non-abelian group.
Denote by $^*$ the classical involution. Then
$(RG)^-_*$ is commutative if and only if one of the
following conditions holds:
\begin{enumerate}
\item $G=K\rtimes \GEN{x}$ where $K=\langle g \mid g ^2\neq1 \rangle$, $K$ is abelian, $x^2=1$, $xkx=k^{-1}$
for all $k\in K$ and  $R_{2}^{2}= \{ 0\}$.
\item $\Char(R)=4$,  $G$ has exponent $4$, $G'$ is a cyclic
group of order $2$, $G/G'$ is an elementary abelian
2-subgroup and elements of order $2$ commute if
$R_{2}^{2}\neq \{ 0\}$.
\end{enumerate}
\end{corollary}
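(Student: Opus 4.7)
The plan is to apply Theorem~\ref{resultchar4} with $\vp$ taken to be the classical involution $*$, and translate conditions (a) and (b) of that theorem into conditions (1) and (2) of the corollary. The key observation driving the whole translation is that $G_{*}=\{g\in G:g^{-1}=g\}=\{g\in G:g^{2}=1\}$, so the phrase ``$g\notin G_{\vp}$'' becomes ``$g^{2}\neq 1$''. With this single dictionary entry, the subgroup $K=\GEN{g\in G\mid g\notin G_{\vp}}$ appearing in the theorem is exactly the subgroup $\langle g\mid g^{2}\neq 1\rangle$ appearing in the corollary.

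For condition (a) of the theorem, I start from the data $G=K\cup Kx$ with $x\in G_{*}$ and $*(k)=xkx^{-1}$ for all $k\in K$. Since $x\in G_{*}$ forces $x^{2}=1$, inversion $*(k)=k^{-1}$ combined with $xkx^{-1}=xkx$ yields precisely $xkx=k^{-1}$. To upgrade the coset decomposition to a semidirect product, I observe that $x\notin K$: otherwise $G=K$ would be abelian, contradicting the hypothesis. Thus $\langle x\rangle=\{1,x\}$ meets $K$ trivially, and normality of $K$ (index $2$) together with the inversion action gives $G=K\rtimes\langle x\rangle$. The condition $R_{2}^{2}=\{0\}$ carries over verbatim.

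For condition (b), the hypothesis $g^{2}\in G_{*}$ for all $g$ translates to $g^{4}=1$, so $G$ has exponent dividing $4$; the non-abelian hypothesis rules out exponent $2$, forcing the exponent to equal $4$. The hypothesis $G/G'=(G/G')_{*}$ says $\bar g=\bar g^{-1}$ for every $\bar g\in G/G'$, which is to say that $G/G'$ is elementary abelian of exponent $2$. The hypothesis $|G'|=2$ says directly that $G'$ is cyclic of order $2$. Finally the clause ``$G_{\vp}$ is commutative'' reads ``the elements of order at most $2$ commute,'' which, accounting for the identity trivially, is the corollary's requirement that elements of order $2$ commute.

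The converse direction is obtained by reversing each of these translations: the conditions in the corollary imply the corresponding hypotheses of Theorem~\ref{resultchar4}, whence commutativity of $(RG)^{-}_{*}$ follows. No genuine obstacle is expected, since the entire argument is a dictionary between the general involution language and the concrete one for $*$; the only point requiring a small amount of care is the extraction of the semidirect product decomposition in (1), where the non-abelian hypothesis is used to ensure that the distinguished element $x\in G_{*}$ lies outside $K$.
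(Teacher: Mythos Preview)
Your proposal is correct and is precisely the intended derivation: the paper states this corollary immediately after Theorem~\ref{resultchar4} with no separate proof, so the expected argument is exactly the specialization $\vp=*$ together with the dictionary $G_{*}=\{g:g^{2}=1\}$ that you spell out. Your handling of the only nontrivial point---extracting the semidirect product in (1) from $G=K\cup Kx$ by using non-abelianness to force $x\notin K$---is fine.
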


\section{Rings of characteristic three}

In this section we determine when $(RG)_{\vp}^{-}$
is commutative if $\Char (R)=3$  (and thus $R_{2}=
\{ 0 \}$). Again we begin by recalling two technical
lemmas from \cite{varphi}.

\begin{lemma}{\em \cite[Lemma 1.3]{varphi}}\label{lemg2}
Let $R$ be a commutative ring with $char(R)\neq 2$
and let $g\in G\setminus G_\vp$. If $(RG)^-_\vp$ is
commutative then one of the following conditions
holds:
\begin{enumerate}
\item $g\vp(g)=\vp(g)g$.
\item $g^2\in G_\vp$.
\end{enumerate}
\end{lemma}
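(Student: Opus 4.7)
The plan is to extract information from the commutativity hypothesis by pairing $g-\vp(g)$ with a second antisymmetric element built from $g$ itself. Since $g\in G\setminus G_\vp$ implies $g^{-1}\in G\setminus G_\vp$, both $\alpha:=g-\vp(g)$ and $\beta:=g^{-1}-\vp(g)^{-1}$ lie in $(RG)^-_\vp$, and the hypothesis forces $[\alpha,\beta]=0$.

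I first expand $[\alpha,\beta]$ directly. In $\alpha\beta$ and $\beta\alpha$ the ``scalar'' products $gg^{-1}$, $\vp(g)\vp(g)^{-1}$, $g^{-1}g$, and $\vp(g)^{-1}\vp(g)$ all equal $1$ and so cancel in the commutator, leaving
\[
0=[\alpha,\beta] = -g\vp(g)^{-1}-\vp(g)g^{-1}+g^{-1}\vp(g)+\vp(g)^{-1}g.
\]
Writing $u=g^{-1}\vp(g)$ and $v=g\vp(g)^{-1}$, and noting that $\vp(g)^{-1}g=u^{-1}$ and $\vp(g)g^{-1}=v^{-1}$, this becomes the identity $u+u^{-1}-v-v^{-1}=0$ in $RG$.

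The crux is now to read off the group-theoretic consequences. Since $\Char(R)\neq 2$, a short case analysis on the possible coincidences among the four group elements $u,u^{-1},v,v^{-1}$ shows that the $R$-linear combination $u+u^{-1}-v-v^{-1}$ can vanish only if the multisets $\{u,u^{-1}\}$ and $\{v,v^{-1}\}$ coincide; in particular either $u=v$ or $u=v^{-1}$. The alternative $u=v$ rewrites as $g^{-1}\vp(g)=g\vp(g)^{-1}$, i.e.\ $g^2=\vp(g)^2=\vp(g^2)$, yielding $g^2\in G_\vp$ and hence conclusion (2). The alternative $u=v^{-1}$ rewrites as $g^{-1}\vp(g)=\vp(g)g^{-1}$, equivalently $g\vp(g)=\vp(g)g$, which is conclusion (1).

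The step requiring most care is the support-matching in the third paragraph, specifically the degenerate situations in which $u=u^{-1}$ or $v=v^{-1}$: there the corresponding coefficient becomes $\pm 2$ rather than $\pm 1$, and one must invoke invertibility of $2$ in $R$ to rule out ``$2a-b-c=0$'' type collapses and still land in one of the two stated alternatives. Once this bookkeeping is performed, the lemma follows immediately.
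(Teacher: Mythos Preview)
The paper does not give its own proof of this lemma; it is quoted verbatim from \cite[Lemma~1.3]{varphi}. So there is nothing in the present paper to compare your argument against, and I will simply assess your proof on its own.

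Your approach is correct and natural: pairing $g-\vp(g)$ with $g^{-1}-\vp(g^{-1})$ and reading off the support identity $u+u^{-1}=v+v^{-1}$ is exactly the right move, and the dichotomy $u=v$ versus $u=v^{-1}$ translates cleanly into conclusions~(2) and~(1) respectively.

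One small correction to your final paragraph: you do \emph{not} need $2$ to be invertible in $R$, and indeed the hypothesis $\Char(R)\neq 2$ does not give you that (e.g.\ $R=\Z/4\Z$). What you actually need, and what the hypothesis does give, is merely $2\cdot 1_R\neq 0$. In the degenerate case $u=u^{-1}$, $v=v^{-1}$, $u\neq v$, the relation becomes $2u-2v=0$ in $RG$; comparing coefficients at the distinct basis elements $u$ and $v$ forces $2=0$ in $R$, contradicting $\Char(R)\neq 2$. The mixed case $u=u^{-1}$, $v\neq v^{-1}$ (or its mirror) similarly produces a basis element with coefficient $\pm 1$ or $\pm 2$, neither of which is zero. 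So the bookkeeping goes through with only $2\neq 0$, and once you replace ``invertibility of $2$'' by ``$2\neq 0$'' your argument is complete.
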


\begin{lemma}{\em\cite[Lemma 1.1]{varphi}}\label{lema2}
Let $R$ be a commutative ring with $\Char (R) = 3$.
Let $g,h\in G\setminus G_{\vp}$ be two non-commuting
elements. If $(RG)^{-}_{\vp}$ is commutative then
one of the following conditions holds
\begin{enumerate}
\item $gh\in G_{\vp}$, $hg\in G_{\vp}$, $h\vp (g)=g\vp (h)$ and
$\vp (h)g=\vp (g) h$.
\item $gh\in G_{\vp}$, $hg\in
G_{\vp}$, $h\vp (g)=\vp (g)h$.
\item $gh\in
G_{\vp}$, $hg=\vp (g)h=g\vp (h)$.
\item $gh=h\vp(g)=\vp(g)\vp(h)$, $\vp(h)g=\vp(g)h$.
\item $gh=\vp(h)g=\vp(g)\vp(h)$,
$h\vp(g)=g\vp(h)$.
\item $hg\in G_{\vp}$, $gh=\vp (h)g=h\vp (g)$.
\end{enumerate}
\end{lemma}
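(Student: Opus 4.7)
The plan is to exploit the only relation at hand, namely that the two antisymmetric elements $g - \varphi(g)$ and $h - \varphi(h)$ commute. Expanding
\[
[g - \varphi(g),\ h - \varphi(h)] = 0
\]
yields a signed sum of eight group elements. Denote the positive summands $P_1 = gh$, $P_2 = \varphi(g)\varphi(h)$, $P_3 = h\varphi(g)$, $P_4 = \varphi(h)g$ and the negative summands $N_1 = g\varphi(h)$, $N_2 = \varphi(g)h$, $N_3 = hg$, $N_4 = \varphi(h)\varphi(g)$. The involution pairs them via $\varphi(P_1) = N_4$, $\varphi(P_2) = N_3$, $\varphi(P_3) = N_1$, $\varphi(P_4) = N_2$. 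Setting $\alpha = P_1 + P_2 + P_3 + P_4$, the identity becomes $\alpha = \varphi(\alpha)$ in $RG$, and since $\Char(R) = 3$ it is equivalent to the coefficient congruence
\[
c_\alpha(w) \equiv c_\alpha(\varphi(w)) \pmod{3} \qquad (\forall w \in G),
\]
where $c_\alpha(w)$ is the coefficient of $w$ in $\alpha$.

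Next I would prune the admissible coincidences among the eight group elements. Any identification that forces $g \in G_\varphi$, $h \in G_\varphi$, or $gh = hg$ is excluded. A direct check shows the only admissible $P_i = N_j$ identifications are $P_1 = N_4$ (i.e.\ $gh \in G_\varphi$), $P_2 = N_3$ (i.e.\ $hg \in G_\varphi$), $P_3 = N_1$, $P_3 = N_2$, $P_4 = N_1$, $P_4 = N_2$, and the only admissible equalities inside $\{P_i\}$ are $P_1 = P_3$, $P_1 = P_4$, $P_2 = P_3$, $P_2 = P_4$, $P_3 = P_4$.

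Now I would split on the integer profile of $c_\alpha$: its entries sum to $4$, each is at most $4$, and each must match its $\varphi$-image modulo $3$. A direct elimination leaves exactly two scenarios. In the first, $\varphi$ permutes the multiset $\{P_1, P_2, P_3, P_4\}$; the admissible identifications above then force $P_1 = N_4$ and $P_2 = N_3$ (so $gh, hg \in G_\varphi$), after which either both $P_3$ and $P_4$ are $\varphi$-fixed, giving case (1), or $\varphi$ swaps them via $P_3 = N_2$, which by applying $\varphi$ is equivalent to $P_4 = N_1$, giving case (2). In the second, three of the $P_i$'s coincide at some element $w \notin G_\varphi$ while the remaining $P_i$ lies in $G_\varphi$; the four choices of the omitted index yield exactly the four asymmetric configurations (3)--(6).

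The main obstacle will be the combinatorial bookkeeping: a naive enumeration of joint partitions of $\{P_i\} \cup \{N_j\}$ runs to many hundreds of cases. The essential simplifications are to first discard every identification forbidden by $g, h \notin G_\varphi$ and $gh \neq hg$, and then to exploit the $\varphi$-pairing between the $P$'s and $N$'s so that every constraint on an $N$-coincidence is read off from an already-determined $P$-coincidence.
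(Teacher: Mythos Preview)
The paper does not actually prove this lemma; it is quoted from \cite[Lemma~1.1]{varphi} without argument. Your approach---expand $[g-\varphi(g),\,h-\varphi(h)]=0$ into the eight products $P_1,\dots,P_4,N_1,\dots,N_4$, note that each $N_j$ is the $\varphi$-image of some $P_i$, and then classify the coincidence patterns via the congruence $c_\alpha(w)\equiv c_\alpha(\varphi(w))\pmod 3$---is the natural one and is essentially what the cited reference does. The split into ``$\varphi$ permutes $\{P_i\}$'' (giving (1)--(2)) versus ``a $3+1$ collapse'' (giving (3)--(6)) is correct; one also has to dispose of the $2+2$, $2+1+1$ and $4$ patterns, which you leave implicit but which collapse quickly once the mod-$3$ constraint forces the doubled block into $G_\varphi$.

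There is one slip in your pruning. You assert that $P_1=P_2$ (that is, $gh=\varphi(g)\varphi(h)$) is not an admissible equality among the $P_i$, but nothing in the hypotheses $g,h\notin G_\varphi$, $gh\ne hg$ excludes it, and in fact it \emph{does} occur: in case~(4) the triple coincidence is $P_1=P_2=P_3$, and in case~(5) it is $P_1=P_2=P_4$. Your own ``four choices of the omitted index'' already uses these triples, so the pruning list is inconsistent with the casework that follows it. This does not damage the conclusion---in the $2+2$ and $2+1+1$ patterns the block $\{1,2\}$ would force $gh\in G_\varphi$ together with $gh=\varphi(g)\varphi(h)$, whence $\varphi(g)\varphi(h)=\varphi(h)\varphi(g)$ and $gh=hg$, a contradiction---but the list of admissible $P_i=P_j$ must include $P_1=P_2$.
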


The following lemma was proved in \cite{varphi} in
the case when $\Char (R)$ is distinct from both $2$
and $3$.

\begin{lemma} \label{onethentwo}
Let $R$ be a commutative ring. Let $g,h\in
G\setminus G_\varphi$ be non-commuting elements that
satisfy $(2)$ of Lemma~\ref{lema2}. If $char(R)=3$
then $g,h$ also satisfy $(1)$ of Lemma~\ref{lema2}.
\end{lemma}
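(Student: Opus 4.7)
The plan is to extract additional commutation relations from the identity $[g-\vp(g), h-\vp(h)]=0$, which holds by commutativity of $(RG)^-_\vp$. Expanding the commutator produces eight group-ring monomials. Condition $(2)$ supplies three identities in $G$: $gh=\vp(h)\vp(g)$ and $hg=\vp(g)\vp(h)$ (because $gh,hg\in G_\vp$), together with $h\vp(g)=\vp(g)h$. These three identities pair and cancel six of the eight monomials, and what remains is the single equation $\vp(h)g-g\vp(h)=0$ in $RG$. Since $\Char(R)\neq 2$ the coefficients $\pm 1$ are nonzero, so this forces $\vp(h)g=g\vp(h)$ as elements of $G$.

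Next I apply Lemma~\ref{lemg2} to $g$: either $g^{2}\in G_\vp$, or $g\vp(g)=\vp(g)g$. I would dispose of the second alternative first. From $gh\in G_\vp$ and $h\vp(g)=\vp(g)h$ one gets $\vp(h)=gh\vp(g)^{-1}=g\vp(g)^{-1}h$, so $\vp(h)$ is a product of factors each of which commutes with $\vp(g)$ (namely $g$ by the Case~B hypothesis, $\vp(g)^{-1}$ trivially, and $h$ by $(2)$). Hence $\vp(g)\vp(h)=\vp(h)\vp(g)$, and applying $\vp$ yields $hg=gh$, contradicting the non-commuting hypothesis. So $g^{2}\in G_\vp$, i.e.\ $\vp(g)^{2}=g^{2}$. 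Substituting this into $g\vp(h)=g^{2}h\vp(g)^{-1}$ and using $h\vp(g)^{-1}=\vp(g)^{-1}h$ gives $g\vp(h)=\vp(g)h=h\vp(g)$, so $h\vp(g)=g\vp(h)$. Together with $\vp(h)g=g\vp(h)$ from the first step, this yields $\vp(h)g=\vp(g)h$, completing the verification of condition~$(1)$.

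The main obstacle is conceptual: condition $(2)$ looks strictly weaker than $(1)$, and one must see that the single identity $[g-\vp(g),h-\vp(h)]=0$, seemingly very weak once $(2)$ collapses most of its terms, still carries enough information to bridge the two. It delivers the missing commutation $g\vp(h)=\vp(h)g$, which then activates Lemma~\ref{lemg2} and forces $\vp(g)^{2}=g^{2}$; this is exactly the extra relation needed to deduce $h\vp(g)=g\vp(h)$. Characteristic $3$ plays no explicit role in the calculation itself; it matters only because condition $(2)$ of Lemma~\ref{lema2} arises as a separate case precisely when $\Char(R)=3$.
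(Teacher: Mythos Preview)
Your proof is correct and notably more direct than the paper's. The paper introduces the auxiliary element $\vp(g)hg$, checks that it lies in $G\setminus G_\vp$ and does not commute with $h$, and then reapplies the six-case classification of Lemma~\ref{lema2} to the pair $(\vp(g)hg,h)$; a lengthy case analysis (with further subcases on whether $g^2\in G_\vp$ and on the pair $(g^2,h)$) eventually yields $\vp(h)g=\vp(g)h$. You instead go back to the source relation $[g-\vp(g),h-\vp(h)]=0$: the three identities encoded in condition~(2) collapse six of the eight monomials, leaving $\vp(h)g=g\vp(h)$ directly. After that, a single invocation of Lemma~\ref{lemg2} on $g$ alone, together with the observation that $\vp(h)=g\vp(g)^{-1}h$ forces $(\vp(g),\vp(h))=1$ (hence $(g,h)=1$) whenever $(g,\vp(g))=1$, disposes of one alternative; the surviving alternative $\vp(g)^2=g^2$ then gives $g\vp(h)=g^2\vp(g)^{-1}h=\vp(g)h=h\vp(g)$ in one line. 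Your route avoids the auxiliary element and the repeated six-case breakdown entirely, and your remark that characteristic~$3$ enters only through the provenance of condition~(2) is accurate: the argument itself uses only $\Char(R)\neq 2$.
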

\begin{proof}
Consider the element $\vp(g)hg\in G$. Since $h\not
\in G_\vp$ we have that $\vp(g)hg\not\in G_\vp$.
Also $\vp(g)hg$ and $h$ do not commute because, by
assumption, $h\vp(g)=\vp(g)h$ and $gh\neq hg$.
Assume that $g$ and $h$ satisfy (2) of
Lemma~\ref{lema2}. We claim that then $\vp
(h)g=\vp(g)h$.

We deal with two mutually exclusive cases. First,
assume that $\vp(g)hgh\in G_\vp$, i.e.,
$\vp(g)hgh=\vp(\vp(g)hgh)=\vp(h)\vp(g)\vp(h)g=\vp(h)hg^2$
since $hg\in G_\vp$. If $(g^2,h)=1$ we obtain that
$\vp(g)h=\vp(h)g$, as desired. So, to deal with this
case, we may assume that $(g^2,h)\neq 1$. If $g^2\in
G_\vp$ then, using (2),  we observe that $\vp (h)
hg^{2}=
\vp(g)hgh=\vp(g^2)\vp(h)h=g^2\vp(h)h=\vp(h)g^2h$.
Hence, we get that $(g^2,h)=1$, a contradiction. In
the rest of the proof we will several times use
(without referring to this) that $gh,\, hg\in G_\vp$
and $(g,\vp(h))=1=(h,\vp(g))$. Moreover, since
$g^2\not\in G_\vp$, we also have that
$g\vp(g)=\vp(g)g$, by Lemma~\ref{lemg2}.

So, if $\vp(g)hgh\in G_\vp$ then we may assume that
$g^2\not\in G_\vp$ and $(g^2,h)\neq 1$.  Hence,
$g^2$ and $h$ satisfy one of the six conditions of
Lemma~\ref{lema2}. We now show that this situation
can not occur.  Assume first that $g^2 h\in G_\vp$.
Then,
$g^2h=\vp(g^2h)=\vp(h)\vp(g^2)=gh\vp(g)=g\vp(g)h$
and thus   $g\in G_\vp$, a contradiction. Therefore
$g^2$ and $h$ do not satisfy conditions $(1)-(3)$ of
Lemma~\ref{lema2}. If $g^2$ and $h$ satisfy $(4)$ of
Lemma~\ref{lema2} then $g^2h=h\vp(g^2)=\vp(g^2)h$
and hence $g^2\in G_\vp$, a contradiction. Finally,
if $g^2$ and $h$ satisfy either $(5)$ or $(6)$ of
Lemma \ref{lema2} then
$g^2h=\varphi(h)g^2=g^2\varphi(h)$ and thus $h\in
G_\varphi$, a contradiction. This finishes the proof
of the first case.

Second, assume that $\vp(g)hgh\not\in G_\vp$. Then
$\vp(g)hg$ and $h$ satisfy one of the conditions
$(4)-(6)$ of Lemma~\ref{lema2}. We show that all
these lead to a contradiction and hence that this
case also can not occur. If $\vp(g)hg$ and $h$
satisfy $(4)$ of Lemma~\ref{lema2}, then $\vp(g)hgh
= h\vp( \vp (g) hg) = h\vp (g) \vp (h) g = h\vp (hg)
g= h^2g^2$ and thus $\vp(g)gh=hg^2=\vp(g)\vp(h)g$;
so $gh=\vp(h)g$ and hence $g\in G_\vp$, a
contradiction.

Suppose that $\varphi(g)hg$ and $h$ satisfy $(5)$ or
$(6)$ of Lemma~\ref{lema2}. Then
\begin{equation}\label{str}
\varphi(g)hgh=\varphi(h)\varphi(g)hg
\end{equation}
First assume that $g^2\in G_\varphi$ then we have
that
$\varphi(h)hg^2=g^2\varphi(h)h=\varphi(g^2)\varphi(h)h=
\varphi(g)hgh$. On the other hand
$\varphi(h)\varphi(g)hg=\varphi(h)h\varphi(g)g$.
Thus, by (\ref{str}) we get that $g\in G_\varphi$, a
contradiction. Therefore $g^2\not\in G_\varphi$ and
hence, by Lema~\ref{lemg2} we get that
$(g,\varphi(g))=1$. If also $(h,\varphi(h))=1$ then
$\varphi(g)hgh=hg\varphi(g)h$ and on the other hand,
$\varphi(h)\varphi(g)hg=hg\varphi(h)\varphi(g)$.
Then, by (\ref{str}), we get that
$\varphi(g)h=\varphi(h)\varphi(g)=gh$ and thus $g\in
G_\varphi$, a contradiction. So, again by
Lemma~\ref{lemg2} we have that $h^2\in G_\varphi$.
Therefore $\varphi(h)\varphi(g)hg=gh^2g=h^2g^2$ and
on the other hand $\varphi(g)hgh=h\varphi(g)gh$.
Thus, by (\ref{str}), we have that
$\varphi(g)gh=hg^2=\varphi(g)\varphi(h)g$. Therefore
$\varphi(h)\varphi(g)=gh=\varphi(h)g$ and hence
$g\in G_\varphi$, again a contradiction. So
$\varphi(g)hg$ and $h$ do not satisfy neither $(5)$
nor $(6)$ of Lemma~\ref{lema2}.

So, we have proved that if (2) of  Lemma \ref{lema2}
holds for non-commuting elements $g,h\in G\setminus
G_{\varphi}$ then $\varphi (h)g=\varphi (g)h$. Since
$(h,\varphi (g)) =1=( g,\varphi (h))$ it also
follows that $h\varphi(g)=g\varphi(h)$.
Consequently, we have shown that (1) of Lemma
\ref{lema2} holds for $g$ and $h$.
\end{proof}

\begin{lemma}\label{g3}
Let $R$ be a commutative ring. Let $g,h\in
G\setminus G_\varphi$ be non-commuting elements.
\begin{enumerate}
\item If $g$ and $h$ satisfy $(1)$ (or $(2)$) of
Lemma~\ref{lema2} then  $g^3,h^3\not\in G_\vp$
\item If $g$ and $h$ satisfy one of the conditions $(3)-(6)$ of Lemma~\ref{lema2}
then $g^3,h^3\in G_\vp$ and  $g^3, h^3\in Z(\langle
g,h,\vp(g),\vp(h)\rangle)$.
\end{enumerate}
\end{lemma}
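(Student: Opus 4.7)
The plan is to proceed by case analysis on which of the six conditions of Lemma~\ref{lema2} is in force, handling parts (1) and (2) separately.

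For part (1), covering cases (1) and (2) of Lemma~\ref{lema2}, I first invoke Lemma~\ref{onethentwo} to reduce case (2) to case (1), so it suffices to work with the relations $gh, hg\in G_\vp$, $h\vp(g)=g\vp(h)$ and $\vp(h)g=\vp(g)h$. I argue by contradiction: assume $g^3\in G_\vp$. Lemma~\ref{lemg2} gives two possibilities. If $g^2\in G_\vp$, then $\vp(g^3)=\vp(g)\vp(g^2)=\vp(g)g^2$, and $g^3=\vp(g^3)$ forces $g=\vp(g)$, contradicting $g\notin G_\vp$. Otherwise $g\vp(g)=\vp(g)g$; setting $x=\vp(g)g^{-1}$ one obtains $x^3=1$. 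Combining the two expressions $\vp(h)=\vp(g)^{-1}hg$ and $\vp(h)=gh\vp(g)^{-1}$ coming from $hg\in G_\vp$ and $gh\in G_\vp$, one derives $hxh^{-1}=x^{-1}$ and $hg^2=g^2hx$. A contradiction is then extracted by evaluating $\vp(hg^2 h)$ in two ways: once as $\vp((hg)(gh))=gh\cdot hg$ (using $gh, hg\in G_\vp$), and once as $\vp(h)\vp(g^2)\vp(h)$; the resulting identity, together with $x^3=1$, forces $g\in G_\vp$ or $(g,h)=1$. By the same argument with the roles of $g$ and $h$ reversed, $h^3\notin G_\vp$.

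For part (2), I follow a template that I illustrate on case (3) of Lemma~\ref{lema2}. The given relations yield $\vp(g)=hgh^{-1}$ and $\vp(h)=g^{-1}hg$, and expanding $gh\in G_\vp$ produces the quadratic relation $(hg)^2=g^2h^2$, equivalently $hgh=g^2h^2g^{-1}$. Applying $\vp^2=\Id$ to $g$ then forces $g\vp(g)=\vp(g)g$. Combining this with the above relations one obtains, on one hand, $g^2h=h^2gh^{-1}g$ (from $ghg=hgh^{-1}gh$ and $ghg=h^{-1}g^2h^2$), and on the other hand $hg^2=gh^2gh^{-1}$ (from $hg=g^2h^2g^{-1}h^{-1}$ after using $g\vp(g)=\vp(g)g$). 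Computing $g^3h=g\cdot g^2h=gh^2gh^{-1}g$ and $hg^3=hg^2\cdot g=gh^2gh^{-1}g$ shows these agree, so $(g^3,h)=1$. Hence $g^3=hg^3h^{-1}=\vp(g)^3=\vp(g^3)$, which gives $g^3\in G_\vp$. Since $g^3$ commutes with both $g$ and $h$, it also commutes with $\vp(g)=hgh^{-1}$ and $\vp(h)=g^{-1}hg$, placing $g^3$ in $Z(\langle g,h,\vp(g),\vp(h)\rangle)$. A symmetric argument handles $h^3$, and cases (4), (5) and (6) follow the same template: derive the explicit conjugacy expressions for $\vp(g)$ and $\vp(h)$, extract the corresponding quadratic relation (for example, $(gh)^2=h^2g^2$ in case (6)), use $\vp^2=\Id$ to obtain $g\vp(g)=\vp(g)g$, and compute $g^3h$ and $hg^3$ along two chains that produce the same group element.

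The main obstacle will be in part (1) in the sub-case $g\vp(g)=\vp(g)g$. The relations $x^3=1$ and $hxh^{-1}=x^{-1}$ alone are perfectly consistent, so the contradiction must come from a carefully chosen two-way computation such as that of $\vp(hg^2h)$; less delicate choices of auxiliary element (for instance $\vp^2(g)$ or $\vp^2(h)$ alone) simply produce tautologies. A secondary, more bookkeeping difficulty is that cases (4)--(6) of part~(2), while structurally parallel to case (3), each require their own concrete manipulations and cannot be reduced to case (3) by a single symmetry, so four near-parallel computations must be carried out.
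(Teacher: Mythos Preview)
Your proposal has a genuine gap in part~(1), precisely at the point you yourself flag as the main obstacle. In the sub-case $g\vp(g)=\vp(g)g$, you propose to evaluate $\vp(hg^2h)$ in two ways. But both evaluations give the \emph{same} element $gh^2g$, so no contradiction is produced. Concretely, with $x=\vp(g)g^{-1}$ one has $\vp(g)^2=x^2g^2$; since $\vp(h)x\vp(h)^{-1}=x^{-1}$ (this follows by applying $\vp$ to $hxh^{-1}=x^{-1}$ and using $\vp(x)=x^{-1}$), one gets $\vp(h)\vp(g)^2\vp(h)=x\vp(h)g^2\vp(h)$; then $\vp(h)g=\vp(g)h=xgh$ and $g\vp(h)=h\vp(g)=hxg$ reduce this to $x^2\cdot xgh^2g=x^3gh^2g=gh^2g$. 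So the identity you obtain is $gh^2g=gh^2g$, a tautology. The relations you have assembled ($x^3=1$, $hxh^{-1}=x^{-1}$, $hg^2=g^2hx$) are mutually consistent and do not by themselves contradict $g\notin G_\vp$ or $(g,h)\neq 1$.

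The paper closes this gap by a different mechanism: it first shows $g^2\notin G_\vp$ (from $g^3\in G_\vp$, $g\notin G_\vp$) and derives $g^3h=\vp(h)g^3$, which forces $(g^2,h)\neq 1$; it then applies Lemma~\ref{lema2} to the \emph{new} pair $(g^2,h)$ and eliminates each of the six resulting cases one by one. This re-application of Lemma~\ref{lema2} is the missing idea; your two-way computation does not substitute for it.

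For part~(2) your plan is workable but unnecessarily heavy. Your derivation of $(g,\vp(g))=1$ from $\vp^2(g)=g$ in case~(3) is correct (and is in fact equivalent to $ghg=hgh^{-1}gh$), but the paper obtains it more cheaply by first showing $g^2\notin G_\vp$ and invoking Lemma~\ref{lemg2}. More importantly, your claim that cases (4)--(6) ``cannot be reduced to case~(3) by a single symmetry'' is wrong: the paper observes that swapping $g\leftrightarrow h$ sends (3) to (6), replacing $h$ by $\vp(h)$ sends (3) to (5), and replacing $g$ by $\vp(g)$ sends (3) to (4), so a single computation in case~(3) suffices.
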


\begin{proof}
$1.$ Let $g,h\in G\setminus G_\vp$ be non-commuting
elements. Assume that $g$ and $h$ satisfy (1) of
Lemma~\ref{lema2}. We prove by contradiction that
$g^{3}\not\in G_{\vp}$. So, suppose that $g^3\in
G_\vp$. Since $g \not\in G_\vp$, it follows that
$g^2 \not\in G_\vp$. Also by $(1)$ of
Lemma~\ref{lema2} we have that
\begin{equation}\label{eq}
g^3h=g^2\vp(h)\vp(g)=gh\vp(g^2)=\vp(h)\vp(g^3)=\vp(h)g^3
\end{equation}
Notice that by (\ref{eq}) it follows that
$(g^2,h)\neq 1$, because otherwise $gh=\vp(h)g$, a
contradiction. Therefore $g^2$ and $h$ satisfy one
of the  conditions $(1)-(6)$ of Lemma~\ref{lema2}.
Assume first that $g^2h\in G_\vp$. Then by
(\ref{eq}) we have that
$g\vp(h)\vp(g^2)=gg^2h=\vp(h)\vp(g^3)$. Hence
$g\vp(h)=\vp(h)\vp(g)=gh$ and thus $h\in G_\vp$, a
contradiction. Therefore $g^2$ and $h$ do not
satisfy  conditions $(1)-(3)$ of Lemma~\ref{lema2}.
Second, assume that $g^2h=\vp(h)g^2$. Then, by
(\ref{eq}), it follows that
$\vp(h)g^3=gg^2h=g\vp(h)g^2$ and thus
$(g,\vp(h))=1$. Therefore, again by (\ref{eq}), we
get that $h\in G_\vp$, a contradiction. So, $g^2$
and $h$ do not satisfy conditions $(5)-(6)$. Hence,
$g^2$ and $h$ satisfy $(4)$. Then, since
$(g,\vp(g))=1$ by Lemma~\ref{lemg2}, we have that
$\vp(g^3)h=gg^2h=gh\vp(g^2)=g\vp(g^2)\vp(h)=\vp(g^2)g\vp(h)$.
Hence $\vp(g)h=g\vp(h)=h\vp(g)$. Consequently, by
(\ref{eq}), we get that $h\in G_\vp$, a
contradiction. This finishes the proof of the fact
that $g^{3}\not\in G_{\vp}$. Because of the symmetry
in $g$ and $h$ in condition (1) of
Lemma~\ref{lema2}, we thus also obtain that
$h^3\not\in G_{\vp}$.

$2.$ Notice that if in $(3)$ of Lemma~\ref{lema2} we
interchange the roles of $g$ and $h$ then we obtain
$(6)$, if we change  $h$ by $\vp(h)$ we have $(5)$
and finally if we change $g$ by $\vp(g)$  we have
$(4)$. Therefore it is enough to show the result for
$(3)$.

So, assume that $g,h\in G\setminus G_{\vp}$ are
non-commuting elements that satisfy $(3)$ of
Lemma~\ref{lema2}. Then
$g^3h=g^2\vp(h)\vp(g)=g\vp(g)h\vp(g)=g\vp(g^2)\vp(h)$
and therefore, since $h\not\in G_\vp$, it follows
that $g^2\not \in G_\vp$. Thus, by
Lemma~\ref{lemg2}, we get that $(g,\vp(g))=1$.
Consequently, $g^3h=\vp(g^2)g\vp(h)=\vp(g^3)h$ and
therefore $g^3\in G_\vp$. Analogously we obtain that
$h^3\in G_\varphi$. Moreover, $g^3h=g^2\vp(h)\vp(g)=
ghg\vp(g)=\vp(h)\vp(g)g\vp(g) = \vp(h)g\vp(g)\vp(g)
= h\vp(g^3)=hg^3$. So, $g^{3}h=hg^{3}$ and thus also
$\vp (h) g^{3}=g^{3}\vp (h)$, as desired. Similarly
we get that $h^3\in Z(\langle
g,h,\vp(g),\vp(h)\rangle)$.
\end{proof}

\begin{remark}\label{remark}
Notice that Lemma~\ref{g3} implies that  if
$g,h,x,y\in G\setminus G_{\vp}$ are such that $g$
and $h$ are non-commuting elements satisfying
condition $(1)$ of Lemma~\ref{lema2} and $x$ and $y$
are non-commuting elements satisfying one of the
conditions $(3)-(6)$ of Lemma~\ref{lema2} then $x$
and $y$ commute with both $g$ and $h$.
\end{remark}

\begin{lemma}\label{key}
Let $R$ be a commutative ring with $\Char(R)=3$ and
assume $RG^{-}_{\vp}$ is commutative. If there exist
non-commuting $g,h\in G\setminus G_{\vp}$ so that
$g$ and $h$ satisfy $(1)$ of Lemma~\ref{lema2} then
all  $x,y\in G\setminus G_{\vp}$ satisfy $(1)$ of
Lemma~\ref{lema2}.
\end{lemma}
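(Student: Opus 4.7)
The plan is to argue by contradiction. Suppose there exist non-commuting $x,y\in G\setminus G_\vp$ that do not satisfy $(1)$ of Lemma~\ref{lema2}; by Lemma~\ref{onethentwo} they then satisfy one of conditions $(3)$--$(6)$ of Lemma~\ref{lema2}. By Lemma~\ref{g3} we have $g^3,h^3\notin G_\vp$ while $x^3,y^3\in G_\vp$, and by Remark~\ref{remark} each of $x,y$ commutes with both $g$ and $h$.

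First I would extend the remark to show that $g$ and $h$ also commute with $\vp(x)$ and $\vp(y)$. The argument is the same dichotomy used in Remark~\ref{remark}: for instance, since $\vp(x)^3=\vp(x^3)=x^3\in G_\vp$, Lemma~\ref{g3}(1) rules out the pair $(g,\vp(x))$ satisfying $(1)$ (and Lemma~\ref{onethentwo} rules out $(2)$) of Lemma~\ref{lema2}, while Lemma~\ref{g3}(2) rules out $(3)$--$(6)$ because $g^3\notin G_\vp$; hence $(g,\vp(x))$ must commute. The three other pairs are handled identically.

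The crux is to analyse the element $gx$. Since $xh=hx$ and $gh\neq hg$, one has $(gx)h=ghx\neq hgx=h(gx)$, so $(gx,h)$ is a non-commuting pair in $G\setminus G_\vp$ provided $gx\notin G_\vp$. I claim $gx\in G_\vp$. Suppose not. Then $(gx,h)$ satisfies one of $(1)$--$(6)$ of Lemma~\ref{lema2}. Since $gx=xg$, one gets $(gx)^3=g^3x^3$; as $x^3\in G_\vp$ commutes with $g^3$ and $g^3\neq \vp(g^3)$ (because $g^3\notin G_\vp$), we conclude $(gx)^3\notin G_\vp$. Lemma~\ref{g3}(2) then forbids $(3)$--$(6)$ for $(gx,h)$, and Lemma~\ref{onethentwo} eliminates $(2)$, so $(gx,h)$ satisfies $(1)$. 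In particular $ghx=(gx)h\in G_\vp$, and applying $\vp$, together with $gh\in G_\vp$ and the commutation of $\vp(x)$ with $g$ and $h$, gives $ghx=\vp(x)\vp(h)\vp(g)=\vp(x)gh=gh\vp(x)$, forcing $x=\vp(x)$, a contradiction. The identical argument with $h$ replacing $g$ yields $hx\in G_\vp$.

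To conclude, $gx\in G_\vp$ reads $gx=\vp(x)\vp(g)$, and since $\vp(x)$ commutes with $g$ this rearranges to $\vp(x)^{-1}x=g^{-1}\vp(g)$; similarly $\vp(x)^{-1}x=h^{-1}\vp(h)$, whence $g^{-1}\vp(g)=h^{-1}\vp(h)$. But condition $(1)$ for $(g,h)$ gives $\vp(g)=h^{-1}g\vp(h)$, so $g^{-1}\vp(g)=g^{-1}h^{-1}g\vp(h)$; combining the two equalities yields $g^{-1}h^{-1}g=h^{-1}$, i.e.\ $gh=hg$, contradicting $(g,h)\neq 1$. The main obstacle I anticipate is the bookkeeping of the many commutations among $g,h,\vp(g),\vp(h),x,y,\vp(x),\vp(y)$; once the extension of Remark~\ref{remark} is in place, every remaining step is a brief symbolic manipulation.
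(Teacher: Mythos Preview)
Your contradiction argument correctly disposes of the case where some \emph{non-commuting} pair $x,y\in G\setminus G_\vp$ fails condition~$(1)$, and your endgame is a nice variant of the paper's: both of you first show $gx\in G_\vp$, but the paper then reads off $g^3x^3=(gx)^3=\vp(gx)^3=\vp(g^3)x^3$ and gets the contradiction $g^3\in G_\vp$ immediately, whereas you also prove $hx\in G_\vp$, extract $g^{-1}\vp(g)=h^{-1}\vp(h)$, and combine this with $h\vp(g)=g\vp(h)$ to force $gh=hg$. Both routes are valid.

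The genuine gap is that the lemma asks for condition~$(1)$ to hold for \emph{all} $x,y\in G\setminus G_\vp$, commuting pairs included (that is, $xy,\,yx,\,x\vp(y),\,\vp(x)y\in G_\vp$ even when $xy=yx$), and the paper devotes a separate paragraph to this. Your hypothesis ``there exist non-commuting $x,y$ failing $(1)$'' is not the full negation, so your argument stops short. Moreover, your proof that $gx\in G_\vp$ relies on $x^3\in G_\vp$ (to get $(gx)^3\notin G_\vp$), which you only know because $x$ sits in a $(3)$--$(6)$ pair; for a general $x\in G\setminus G_\vp$ this information is unavailable, so your method does not directly upgrade to the universal claim needed for the commuting case. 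The paper avoids this by proving $gx\in G_\vp$ for \emph{every} $x\in G\setminus G_\vp$ using only $g^3\notin G_\vp$ and a short case analysis on the pair $(gx,h)$ via Lemma~\ref{lema2} and Lemma~\ref{g3}; once that universal claim is in hand, for commuting $x,y$ with $xy\notin G_\vp$ one applies it to $xy$ and to $x$ to reach $g\in G_\vp$, a contradiction. You should either reproduce that universal claim or supply an independent argument for the commuting case.
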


\begin{proof}
Let $g,h\in G\setminus G_{\varphi}$ be non-commuting
elements  so that $g$ and $h$ satisfy $(1)$ of
Lemma~\ref{lema2}, that is, $gh,\, hg,\,
g\varphi(h)$ and $\varphi(g)h$ are elements of
$G_{\varphi}$. Also, by Lemma~\ref{g3}, we have that
$g^3,\, h^3 \not\in G_\varphi$.

Let $x\in G\setminus G_\varphi$. Then, by
Lemma~\ref{lema2} and Lemma~\ref{g3}, it follows
that $(g,x)=1$ or $gx\in G_\varphi$. We claim that
$gx\in G_\varphi$. In order to prove this  claim
suppose that $gx\not\in G_\varphi$ and thus
$(g,x)=1$. Again, by Lemma~\ref{lema2} and
Lemma~\ref{g3}, it follows that $(h,x)=1$ or $xh\in
G_\varphi$; and $(gx,h)=1$ or $gxh\in G_\varphi$.
Assume first that $(gx,h)=1$, that is, $gxh=hgx$.
Since $gh\neq gh$ we get that $hx\neq xh$ and thus
$xh\in G_\varphi$. Therefore,
$hgx=gxh=g\varphi(h)\varphi(x)=h\varphi(g)\varphi(x)=h\varphi(g
x)$ and thus $gx\in G_\varphi$, a contradiction.
Second assume that $gxh\in G_\varphi$. Then
$gxh=\varphi(h)\varphi(x)\varphi(g)=\varphi(h)\varphi(g)\varphi(x)=gh\varphi(x)$
and hence $xh=h\varphi(x)$. Therefore, and since
$x\not\in G_\varphi$, we get that $xh\neq hx$ and
thus $xh\in G_\varphi$. Then
$\varphi(h)\varphi(x)=xh=h\varphi(x)$ and hence
$h\in G_\varphi$, again a contradiction. This
finishes the proof of the claim.

Now, let $x,\,y\in G\setminus G_\varphi$. We need to
prove that $x$ and $y$ satisfy $(1)$ of Lemma
\ref{lema2}. First we deal with the case that
$(x,y)\neq 1$. Because of Lemma~\ref{onethentwo}, we
only have to show that it is impossible that $x$ and
$y$ satisfy one of the conditions $(3)-(6)$ of
Lemma~\ref{lema2}.  So suppose the contrary. Then,
by Remark~\ref{remark}, $(g,x)=1=(g,y)$. Also, by
Lemma~\ref{g3},  $x^3,\, y^3 \in G_\varphi$. By the
previous claim we have that $gx\in G_\varphi$.
Consequently, $g^3x^3 = (gx)^3 = \varphi(gx)^3 =
\varphi(g^3)\varphi(x^3) = \varphi(g^3)x^3$, and
thus $g^3\in G_\varphi$, a contradiction. So, if
$xy\neq yx$ then $x$ and $y$ satisfy $(1)$ of
Lemma~\ref{lema2}.

Finally, assume $x,y\in G\setminus G_\varphi$ and
$(x,y)=1$. Then $xy\in G_\varphi$. Indeed, suppose
the contrary, that is assume $xy\not\in G_\varphi$.
Hence, by the above claim, $gxy\in G_\varphi$. Thus
$gyx=gxy=\varphi(y)\varphi(x)\varphi(g)=\varphi(y)gx$,
because $gx\in G_\varphi$. Therefore
$gy=\varphi(y)g$. Since $gy\in G_\varphi$, it
follows that $\varphi(y)g=gy=\varphi(y)\varphi(g)$
and thus $g\in G_\varphi$, a contradiction. Hence,
indeed $yx=xy\in G_\varphi$. Replacing $y$ by
$\varphi (y)$ we thus also get that $x\varphi (y)
\in G_{\varphi}$ if $(x,\varphi (y))=1$.  If, on the
other hand, $(x,\varphi (y))\neq 1$ then the
previous implies that again $x\varphi (y)\in
G_{\varphi}$. Similarly, $\varphi(x)y\in G_\varphi$.
Consequently, we have shown that $x$ and $y$ satisfy
$(1)$ of Lemma~\ref{lema2}.
\end{proof}

\begin{lemma}\label{comm}
Let $R$ be a commutative ring with $\Char(R)=3$. Let
$g,h\in G\setminus G_\vp$ be  non-commuting elements
satisfying any of the conditions $(3)-(6)$ of
Lemma~\ref{lema2}. Then $\langle
g^{-1}\vp(g)\rangle=\langle
h^{-1}\vp(h)\rangle=\langle (g,h)\rangle$ and
$(g,h)^3=1$.
\end{lemma}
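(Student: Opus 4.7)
The plan is to reduce to condition $(3)$ of Lemma~\ref{lema2} using the symmetries noted in the proof of Lemma~\ref{g3}$(2)$---namely, the substitutions $g\leftrightarrow h$, $h\to\vp(h)$, and $g\to\vp(g)$ send conditions $(6), (5), (4)$ to $(3)$ respectively---and then to prove the conclusion directly in that case. Each such substitution leaves the subgroups $\langle g^{-1}\vp(g)\rangle$ and $\langle h^{-1}\vp(h)\rangle$ invariant (possibly interchanged or with inverted generators); once the conclusion for $(3)$ is established, the commutator $(g,h)$ of the original pair is readily related to that of the transformed pair through the centrality information produced along the way.

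Now assume $g,h$ satisfy $(3)$, so $gh\in G_{\vp}$ and $hg=\vp(g)h=g\vp(h)$. These rearrange to $\vp(g)=hgh^{-1}$ and $\vp(h)=g^{-1}hg$, and setting $c=(g,h)$ gives $\vp(g)=c^{-1}g$. The first key step is to show that $c$ commutes with $g$: the proof of Lemma~\ref{g3}$(2)$ already yields $g^{2}\notin G_{\vp}$, so Lemma~\ref{lemg2} gives $g\vp(g)=\vp(g)g$; substituting $\vp(g)=c^{-1}g$ and cancelling $g$ forces $gc^{-1}=c^{-1}g$.

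From the rearrangement $hg=c^{-1}gh$ of $hg=\vp(g)h$, iterating twice and using $cg=gc$ gives $hg^{3}=c^{-3}g^{3}h$; but $g^{3}$ is central in $\langle g,h,\vp(g),\vp(h)\rangle$ by Lemma~\ref{g3}$(2)$, so $hg^{3}=g^{3}h$, forcing $c^{3}=1$. Since $c$ commutes with $g$, we have $g^{-1}\vp(g)=g^{-1}c^{-1}g=c^{-1}$, and hence $\langle g^{-1}\vp(g)\rangle=\langle c\rangle=\langle (g,h)\rangle$.

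For the identity involving $h$, I use $\vp(h)=g^{-1}hg=c^{-1}h$ (applying $cg=gc$) and invoke Lemma~\ref{lemg2} on $h$. If $h\vp(h)=\vp(h)h$, substitution gives $hc^{-1}=c^{-1}h$, so $h^{-1}\vp(h)=c^{-1}$. If instead $h^{2}\in G_{\vp}$, then $h^{2}=\vp(h)^{2}=(c^{-1}h)^{2}=c^{-1}hc^{-1}h$ simplifies to $hch^{-1}=c^{-1}$, whence $h^{-1}\vp(h)=h^{-1}c^{-1}h=c$. In either sub-case $\langle h^{-1}\vp(h)\rangle=\langle c\rangle$, completing the argument for $(3)$. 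The main obstacle I anticipate is the bookkeeping in the reductions from $(4)$--$(6)$ to $(3)$: one must check that the commutator of the transformed pair generates the same cyclic subgroup as $(g,h)$ does in the original pair, which relies essentially on $c^{3}=1$ and the partial centrality of $c$ just established.
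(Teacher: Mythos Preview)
Your argument for case $(3)$ is correct, and the overall strategy is sound, but it differs from the paper's proof in organization and carries some avoidable complications.

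The paper does \emph{not} reduce cases $(4)$--$(6)$ to $(3)$ here. Instead it first observes, via Lemma~\ref{g3}(2), that $g^{3},h^{3}\in G_{\vp}$; since $g,h\notin G_{\vp}$ this forces $g^{2},h^{2}\notin G_{\vp}$, and then Lemma~\ref{lemg2} gives $(g,\vp(g))=1=(h,\vp(h))$ at the outset. With both commutation relations in hand, each of the four cases becomes a two-line computation expressing $g^{-1}\vp(g)$ and $h^{-1}\vp(h)$ directly as powers of $(g,h)$; finally $(g^{-1}\vp(g))^{3}=g^{-3}\vp(g^{3})=1$ yields $(g,h)^{3}=1$. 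This is shorter than your route because it eliminates both your case split on $h$ (the sub-case $h^{2}\in G_{\vp}$ is in fact impossible, as $h^{3}\in G_{\vp}$ and $h\notin G_{\vp}$) and the bookkeeping you flag for the reductions.

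Your reduction strategy does work, but note that to finish it you must verify that $(g,h)$ and the commutator of the transformed pair generate the same cyclic group. This follows once you know $c$ commutes with \emph{both} $g$ and $h$ in case $(3)$ (or at worst is inverted by $h$), and then a short computation shows e.g.\ $(\vp(g),h)=(g,h)^{\pm 1}$. So the obstacle you anticipate is real but minor; had you used $h^{3}\in G_{\vp}$ to rule out $h^{2}\in G_{\vp}$ from the start, the partial centrality would be full centrality and the bookkeeping essentially disappears.
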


\begin{proof}
Let $g,h\in G\setminus G_\vp$ be as in the statement
of the Lemma. Because of Lemma~\ref{g3}, $g^3,h^3\in
G_\vp$. Therefore $g^2, h^2\not\in G_\vp$, because
$g,h\not\in G_\vp$. Hence, by Lemma~\ref{lemg2}, it
follows that $(g,\vp(g))=1=(h,\vp(h))$.

First, assume that $g$ and $h$ satisfy $(3)$ of
Lemma~\ref{lema2}. Hence, $\vp(g)=hgh^{-1}$ and
$\vp(h)=g^{-1}hg$. Therefore
$g^{-1}\vp(g)=g^{-1}hgh^{-1}=\varphi(h)h^{-1}=h^{-1}\vp(h)$.
Thus,
$h^{-1}\vp(h)=g^{-1}\vp(g)=\vp(g)g^{-1}=hgh^{-1}g^{-1}=(g,h)^{-1}$,
as desired.

Second, assume that $g$ and $h$ satisfy $(4)$ of
Lemma~\ref{lema2}. Then $\vp(g)=h^{-1}gh$ and
$\vp(h)=\vp(g^{-1})gh=h^{-1}g^{-1}hgh$. Therefore
$(g,\vp(g))=(g,h^{-1}gh)=1=(h,\vp(h))=(h,g^{-1}hg)$.
Thus,
$g^{-1}\vp(g)=g^{-1}h^{-1}gh=h^{-1}ghg^{-1}=ghg^{-1}h^{-1}=(g,h)$
and
$h^{-1}\vp(h)=h^{-1}g^{-1}hg=(g^{-1}\vp(g))^{-1}$,
again as desired.

Third, assume that $g$ and $h$ satisfy $(5)$ of
Lemma~\ref{lema2}. Then $\vp(h)=ghg^{-1}$ and
$\vp(g)=gh\vp(h)^{-1}=ghgh^{-1}g^{-1}$. Therefore
$(h,\vp(h))=(h,ghg^{-1})=1=(g,\vp(g))=(g,hgh^{-1})$.
Thus, $g^{-1}\vp(g)=hgh^{-1}g^{-1}=(g,h)^{-1}$ and
$h^{-1}\vp(h)=h^{-1}ghg^{-1}=ghg^{-1}h^{-1}=(g^{-1}\vp(g))^{-1}$,
again as desired.

Fourth,  assume that $g$ and $h$ satisfy $(6)$ of
Lemma~\ref{lema2}. Then $\vp(h)=ghg^{-1}$ and
$\vp(g)=h^{-1}gh$. Therefore
$(h,\vp(h))=(h,ghg^{-1})=1=(g,\vp(g))=(g,h^{-1}gh)$.
Thus,
$g^{-1}\vp(g)=g^{-1}h^{-1}gh=h^{-1}ghg^{-1}=ghg^{-1}h^{-1}=(g,h)$
and
$h^{-1}\vp(h)=h^{-1}ghg^{-1}=ghg^{-1}h^{-1}=g^{-1}\vp(g)$,
as desired.

To finish the proof of the lemma notice that since
$g^3\in G_\vp$ and $(g,\vp(g))=1$ it follows that
$(g^{-1}\vp(g))^3=1$ and therefore $(g,h)^3=1$.
\end{proof}

\begin{theorem} \label{resultchar3}
Let $R$ be a commutative ring with $\Char(R)=3$.
Suppose $G$ is a non-abelian group and $\vp$ is an
involution on $G$. Then $(RG)^{-}_\varphi$ is
commutative if and only if one of the following
conditions holds:
\begin{enumerate}
\item[(a)] $K=\GEN{g\in G|\; g\not\in G_\vp}$ is abelian,
$G=K\cup Kx$ where $x\in G_\vp$ and
$\vp(k)=xkx^{-1}$ for all $k\in K$.
\item[(b)] $G$ contains an abelian subgroup of index $2$ that is contained in $G_{\vp}$.
\item[(c)] $|G'|=3$, $(G/G')=(G/G')_\vp$ and $g^3\in G_\vp$ for all $g\in G$.
\end{enumerate}
\end{theorem}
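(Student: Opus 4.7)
The plan is to separate the two directions: reduce sufficiency to a direct computation on the natural generating set, and run a case split for necessity driven by Lemma~\ref{K}, Lemma~\ref{key} and Lemma~\ref{comm}. Since $\Char(R)=3$ gives $R_{2}=\{0\}$, the set $\mathcal{S}$ from the introduction reduces to $\{g-\vp(g) \mid g\in G\setminus G_{\vp}\}$, so commutativity of $(RG)^{-}_{\vp}$ is equivalent to $[g-\vp(g),h-\vp(h)]=0$ for every $g,h\in G\setminus G_{\vp}$.

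For sufficiency I would verify this identity separately under~(a),~(b),~(c). Case~(a) runs exactly as the analogous case in Theorem~\ref{resultchar4}: the relation $\vp(k)=xkx^{-1}$ on the abelian group $K$ makes the four terms of the commutator cancel in pairs. Case~(b) reduces to condition~(1) of Lemma~\ref{lema2} for every pair $g,h\in G\setminus G_{\vp}$, because both elements lie in the unique non-trivial coset of the abelian subgroup $H\subseteq G_{\vp}$ of index~$2$, and so $gh$, $hg$, $g\vp(h)$ and $\vp(g)h$ all land in $H\subseteq G_{\vp}$; the commutator then vanishes by a direct four-term cancellation. Case~(c) uses $|G'|=3$ together with $g^{-1}\vp(g)\in G'$ and $g^3\in G_{\vp}$ to write $\vp(g)=\omega_g g$ with $\omega_g\in G'$ of order $1$ or $3$; expanding the commutator collects the eight terms into integer multiples of individual group elements whose coefficients vanish modulo~$3$.

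For necessity, Lemma~\ref{K} gives $[G:K]\le 2$. If $K$ is abelian, the opening argument of Theorem~\ref{resultchar4} yields~(a). Otherwise pick non-commuting $g_0,h_0\in G\setminus G_{\vp}$; by Lemma~\ref{lema2} together with Lemma~\ref{onethentwo} they satisfy condition~(1) or one of~(3)--(6). Lemma~\ref{key} then produces a clean dichotomy: either every pair $x,y\in G\setminus G_{\vp}$ satisfies~(1), or no pair does and every non-commuting pair satisfies one of~(3)--(6). In the first branch I would derive~(b): the global identity $xy\in G_{\vp}$ for all $x,y\in G\setminus G_{\vp}$ (allowing $x=y$) already forces $g^2\in G_{\vp}$ for every $g\in G$, and a careful analysis of the centralizers of $g_0$ and $h_0$ together with the identities of~(1) produces an abelian subgroup of index~$2$ in $G$ contained in $G_{\vp}$. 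In the second branch I would derive~(c): Lemma~\ref{comm} gives $\langle g^{-1}\vp(g)\rangle=\langle h^{-1}\vp(h)\rangle=\langle(g,h)\rangle$ of order~$3$ for every non-commuting pair in $G\setminus G_{\vp}$, and propagation through shared non-commuters---using the elementary identity $(g_1g_2,g_2)=(g_1,g_2)$ to reduce commutators involving a $\vp$-fixed element to commutators inside $G\setminus G_{\vp}$---forces all these cyclic groups to coincide. Hence $|G'|=3$, and the remaining conditions $(G/G')_{\vp}=G/G'$ and $g^3\in G_{\vp}$ then follow from $g^{-1}\vp(g)\in G'$ together with Lemma~\ref{g3}.

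The main obstacle is the first branch: constructing the abelian subgroup of index~$2$ contained in $G_{\vp}$ is delicate because, with $R_{2}=\{0\}$, elements of $G_{\vp}$ never appear in $\mathcal{S}$, so their commutation relations cannot be tested by a single commutator of the form $[g-\vp(g),h-\vp(h)]$ and must be extracted indirectly from the product identities of condition~(1) applied to pairs assembled from $g_0$, $h_0$ and the hypothetical fixed elements. A secondary obstacle arises in the second branch, where Lemma~\ref{comm} supplies the order-$3$ cyclic commutator group only locally, and gluing the various local commutator groups into a single $G'$ requires a careful separate analysis of pairs $(g,h)$ in which at least one of $g$ or $h$ is $\vp$-fixed.
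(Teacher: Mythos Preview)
Your proposal is correct and follows essentially the same architecture as the paper's proof: the dichotomy supplied by Lemma~\ref{key}, the reduction of the $(3)$--$(6)$ branch to condition~(c) via Lemma~\ref{comm} and Lemma~\ref{g3}, and the explicit commutator computation for sufficiency in case~(c). Your organizational twist of disposing of the case $K$ abelian first (yielding~(a)) and only then entering the dichotomy is harmless, since condition~(a) requires $K$ abelian and is therefore unavailable in the $(1)$-branch once $K$ is assumed non-abelian.

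The one place where your plan diverges from the paper is that, for the $(1)$-branch of necessity and for the sufficiency of~(a) and~(b), the paper simply invokes the proof of Theorem~2.1 in~\cite{varphi}, whereas you intend to carry these out by hand. You have correctly flagged the construction of the abelian index-$2$ subgroup inside $G_{\vp}$ as the genuine work here; be aware that this is precisely the content of the cited external argument, so your ``careful analysis of centralizers'' is really reproving that result. In the $(3)$--$(6)$ branch, note also that Lemma~\ref{g3} by itself only gives $g^{3}\in G_{\vp}$ for elements of $G\setminus G_{\vp}$ that participate in a non-commuting pair; extending this to all $g\in G$ (in particular to elements of $G_{\vp}$ and to elements of $G\setminus G_{\vp}$ that commute with the chosen $x$) requires the small case analysis the paper performs, splitting on whether $(g,x)=1$ and whether $gx\in G_{\vp}$. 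Your plan's final clause slightly understates this, but the required patch is exactly the argument you describe as the ``secondary obstacle''.
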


\begin{proof}
Assume that there exist  non-commuting elements
$g,h\in G\setminus G_{\vp} $ so that $g$ and $h$
satisfy $(1)$ of Lemma~\ref{lema2}. Then, by
Lemma~\ref{key}, all  $x,y\in G\setminus G_{\vp}$
satisfy $(1)$ of Lemma~\ref{lema2}. Because of all
the stated Lemmas, one now obtains, exactly as in
the proof of Theorem~2.1 in \cite{varphi} that
condition (a) or (b) holds.

So now suppose that there do not exist non-commuting
elements $g,h\in G\setminus G_{\vp}$ satisfying
condition $(1)$ (and thus not $(2)$, by
Lemma~\ref{onethentwo} ) of Lemma~\ref{lema2}. Then,
all non-commuting elements $x,y\in G\setminus G_\vp$
satisfying one of the conditions $(3)-(6)$ of
Lemma~\ref{lema2}. In particular, by Lemma~\ref{g3},
$x^{3},y^{3}\in G_{\vp}$. Since $x\not\in G_{\vp}$,
we thus have that $x^{2}\not\in G_{\vp}$ and thus,
by Lemma~\ref{lemg2}, $(\vp(x),x)=1$.

We claim that $g^3\in G_\vp$ for all $g\in G$. So,
let $g\in G$. In case $(g,x)\neq 1$  then it follows
at once from Lemma~\ref{g3} that $g^{3}\in G_{\vp}$.
If, on the other hand, $(g,x)=1$, then we consider
two mutually exclusive cases. First, assume
$gx\not\in G_{\vp}$. Then, again by Lemma~\ref{g3},
$g^{3}x^{3}=(gx)^{3}\in G_{\vp}$ and thus
$g^{3}x^{3}=\vp (g^{3})\vp (x^{3}) =\vp (g^{3})
x^{3}$ and thus $g^{3}\in G_{\vp}$. Second, assume
$gx\in G_{\vp}$. Thus $xg=gx =\vp (x) \vp (g) =\vp
(g) \vp (x)$. Hence, by Lemma~\ref{comm}, $g^{-1}
\vp (g) = x\vp (x)^{-1}=\vp (x)^{-1}x=\vp (g)g^{-1}$
is an element of order $3$. Thus $1=(g^{-1}\vp
(g))^{3} = g^{-3}\vp (g^{3})$. So $g^{3}\in
G_{\vp}$, as claimed.

Next, we claim that if $g\not\in G_\vp$ then
$g^{-1}\vp(g)\in \langle (x,y)\rangle$ (in
particular $G/G'=(G/G')_\vp$). Indeed, if $(g,x)\neq
1$, $(g,\vp (x))\neq 1$, $(g,\vp (y))\neq 1$ or
$(g,y)\neq 1$ the result follows from
Lemma~\ref{comm}. So assume that $(g,x)=(g,\vp (x))
= (g,y)= (g,\vp (y))=1$. If $gx\in G_\vp$ then
$gx=\vp(gx)=\vp(x)\vp(g)=\vp(g)\vp(x)$ and hence
$g^{-1}\vp(g)=x\vp(x)^{-1}\in \langle (x,y)\rangle$
by Lemma~\ref{comm} as desired. Finally if
$gx\not\in G_\vp$ then $(gx,y)=(x,y)\neq 1$. Then,
by Lemma~\ref{comm}, we have that
$(gx)^{-1}\vp(gx)=g^{-1}\vp(g)x^{-1}\vp(x)\in
\langle (x,y)\rangle$ and therefore, since
$x^{-1}\vp(x)\in \langle (x,y)\rangle$ we get that
$g^{-1}\vp(g)\in \langle (x,y)\rangle$, which
finishes the proof of the claim.

To finish the proof of the necessity, we have to
prove that $G'=\langle (x,y)\rangle$ and thus, by
Lemma~\ref{comm}, $|G'|=3$. In order to prove this,
let $g,h\in G$ with $(g,h)\neq 1$. If $g,h\not\in
G_\vp$ then by Lemma~\ref{comm} and the previous
claim we have that $(g,h)\in \langle
g^{-1}\vp(g)\rangle=\langle (x,y)\rangle$. Next
assume that $g\in G_\vp$  and $h\not\in G_\vp$. If
$gh\not\in G_\vp$ then, by Lemma~\ref{comm} and the
previous claim, we have that $(g,h)=(gh,h)\in
\langle h^{-1}\vp(h)\rangle=\langle (x,y)\rangle$,
as desired. If $gh\in G_\vp$ we have that
$gh=\vp(h)g$ and thus, by the previous claim,
$ghg^{-1}h^{-1}=\vp(h)h^{-1}\in \langle
(x,y)\rangle$, as desired. Finally, assume that
$g,h\in G_\vp$. Then, since $(g,h)\neq 1$, it
follows that $h^{-1}g^{-1}\not\in G_\vp$. Therefore,
by the previous claim, $(g,h) = ghg^{-1}h^{-1} =
(h^{-1}g^{-1})^{-1}\vp(h^{-1}g^{-1})\in \langle
(x,y)\rangle$ and the proof of the necessity
concludes.

In order to prove the sufficiency, we need  to show
that the elements in $\mathcal{S}=\{g-\vp(g)\mid
g\in G, \;g\not\in G_\vp \}$ commute. If $G$
satisfies conditions $(a)$ or $(b)$, then the proof
is the same as the sufficiency proof of Theorem~2.1
in  \cite{varphi}. So, assume that $G$ satisfies
condition $(c)$. Then, since $g^3\in G_\vp$, we have
that $g^2\not\in G_\vp$ for all $g\in G\setminus
G_\vp$ and hence, by Lemma~\ref{lemg2},
$(g,\vp(g))=1$. Moreover, it  follows from $(c)$
that $\vp(g)=t^ig$, where $G'=\langle t \rangle$,
$i\in \{\pm 1\}$ for $g\not\in G_\vp$. Then,
clearly, $(g,t)=1$.

Let now $g,h\in G\setminus G_{\vp}$. Then
$\vp(g)=t^ig$ and $\vp(h)=t^jh$ with $i,j\in\{\pm
1\}$ and $$
\begin{array}{lcl}
[g-\vp(g),h-\vp(h)]&=&gh-g\vp(h)-\vp(g)h+\vp(g)\vp(h)-hg+h\vp(g)+\vp(h)g-\vp(h)\vp(g)\\
&=&gh-t^jgh-t^igh+t^it^jgh-hg+t^ihg+t^jhg-t^it^jhg.\\
\end{array}
$$ If $(g,h)=1$ then clearly
$[g-\vp(g),h-\vp(h)]=0$. So, assume that
$1\neq(g,h)=t$. If $i=j$ then,
 since $\Char(R)=3$,
$$
\begin{array}{lcl}
[g-\vp(g),h-\vp(h)]&=&gh-t^igh-t^igh+t^{-i}gh-hg+t^ihg+t^ihg-t^{-i}hg\\
&=& (1-2t^{i}+t^{-i})gh-(1-2t^{i}+t^{-i})hg\\ &=&
(1+t^{i}+t^{-i}) (t-1)hg\\ &=& (1+t+t^{2})(t-1)gh\\
&=&0
\end{array} $$
On the other hand if $i\neq j$ then, again since
$\Char(R)=3$, $$
\begin{array}{lcl}
[g-\vp(g),h-\vp(h)]&=&gh-t^jgh-t^igh+t^it^jgh-hg+t^ihg+t^jhg-t^it^jhg\\
&=&2gh-t^jgh-t^igh-2hg+t^ihg+t^jhg\\ &=&
(2-t^{j}-t^{-i})gh-(2-t^{i}-t^{j})hg\\ &=&
(2-t^{j}-t^{i})(t-1)hg\\ &=& 2(1+t+t^{2})(t-1)hg\\
&=& 0
\end{array}
$$ Similarly, if $(g,h)=t^{-1}$ one  gets that
$[g-\vp(g),h-\vp(h)]=0$, which finishes the proof of
the theorem.
\end{proof}

\section*{Acknowledgements}
Research supported by Onderzoeksraad of Vrije
Universiteit Brussel, Fonds voor Wetenschappelijk
Onderzoek (Vlaanderen), Flemish-Polish bilateral
agreement BIL 01/31, FAPEMIG and CNPq. Proc.
300243/79-0(RN) of Brazil,  D.G.I. of Spain and
Fundaci\'on S\'eneca of Regi\'on de Murcia.

\noindent
\begin{tabular}{ll}
O. Broche Cristo & \hspace{1cm} Eric Jespers
\\ Dep. de Ci\^{e}ncias Exatas & \hspace{1cm} Dept. Mathematics  \\
Universidade Federal de Lavras & \hspace{1cm} Vrije Universiteit
Brussel
\\ Caixa Postal 3037& \hspace{1cm} Pleinlaan 2  \\
37200-000 Lavras, Brazil  & \hspace{1cm} 1050 Brussel, Belgium \\
osnel@ufla.br & \hspace{1cm} efjesper@vub.ac.be
\end{tabular}

 \vspace{12pt} \noindent
\begin{tabular}{ll}
C. Polcino Milies & Manuel Ruiz
\\ Instituto de Matem\'atica y Estat\'{\i}stica  & Dep.
M\'{e}todos Cuantitativos e Inform\'{a}ticos \\
 Universidaded de S\~{a}o Paulo &Universidad Polit\'{e}cnica de Cartagena
\\  Caixa Postal 66281&  Paseo Alfonso
XIII, 50 \\ 05311-970 S\~{a}o Paulo, Brazil  &
30.203 Cartagena, Spain
\\ polcino@ime.usp.br & Manuel.Ruiz@upct.es
\end{tabular}


\begin{thebibliography}{99}
\itemsep=-2pt

\bibitem{A}  S. A. Amitsur,  Identities in rings with involutions, {\it Israel J.
Math.} {\bf 7} (1969) 63--68.

\bibitem{osn1} O. Broche Cristo, Commutativity of symmetric elements in group rings, {\it J. Group Theory}, {\bf 9}, 5 (2006),  673--683

\bibitem{jes-osn-ruiz} O. Broche, E. Jespers, M.
Ruiz, Antisymmetric elements in group rings with an
orientation morphism, {\it Forum Mathematicum}, to appear.

\bibitem{osn2} O. Broche Cristo and C. Polcino Milies,
Commutativity of skew symmetric elements in group
rings, {\it Proc. Edinburgh Math. Soc.} (2) 50
(2007), no. 1, 37-47.


\bibitem{GM} A. Giambruno,  C.Polcino Milies, Unitary units and skew elements in group algebras.
{\it Manuscripta Math. } {\bf 111} (2003), no. 2,
195--209.

\bibitem{gia-seh} A. Giambruno, S.K. Sehgal, Lie nilpotence of
group rings, {\it Comm. Algebra} {\bf 21} (1993),
4253--4261.

\bibitem{gs} A. Giambruno and  S.K. Sehgal, Groups algebras whose Lie algebra
of skew-symmetric elements is nilpotent, Groups,
rings and algebras, 113--120, {\it Contemp. Math.}, 420,
Amer. Math. Soc., Providence, RI, 2006.


\bibitem{GoPas}  J.Z. Gon\c{c}alves and D.S.  Passman,  Unitary units in group algebras,
{\it Israel J. Math.}  {\bf 125} (2001), 131--155.

\bibitem{eric} E.G. Goodaire, E. Jespers, C. Polcino Milies,
{\it Alternative Loop Rings},  North Holland Math.
Studies 184, Elsevier, Amsterdam, 1996.

\bibitem{gupta} N. D. Gupta and F. Levin, On the Lie ideals of a ring, {\it Journal of Algebra},
 {\bf 81} (1), (1993), 225--231.

\bibitem{varphi} E. Jespers and M. Ruiz, Antisymmetric elements in
group rings, {\it J. Algebra and its Appl.}, {\bf
4}, 4 (2005), 341--353.

\bibitem{JR} E. Jespers, M. Ruiz, On symmetric elements and
symmetric units in group rings, {\it Comm. Algebra},
{\bf  34}, (2) (2006), 727-736.


\bibitem{novikov} S. P. Novikov,
Algebraic construction and properties of hermitian
analogues of
  $K$-theory over rings with involution from the viewpoint of Hamiltonian
  formalism,  Applications to differential topology and the theory of
  characteristitic classes II,
 Izv. Akad. Nauk SSSR Ser. Mat.  {\bf 34}  (1970), 475-500;
 English transl. in Math. USSR Izv., {\bf 4} (1970).

\bibitem{ZS}Zalesski\u\i, A. E.; Smirnov, M. B. Lie algebra associated with linear group. {\it Comm. Algebra} {\bf 9}
 (1981), 20, 2075--2100.


\end{thebibliography}
\end{document}